\def\url#1{\expandafter\string\csname #1\endcsname}
\theoremstyle{plain}
\newtheorem{corollary}{Corollary}
\newtheorem{definition}{Definition}[section]
\newtheorem{example}{Example}[section]
\newtheorem{lemma}{Lemma}[section]
\newtheorem{proposition}{Proposition}[section]
\newtheorem{remark}{Remark}
\newtheorem{theorem}{Theorem}[section]
\def\keywords{\vspace{.5em} 
{\textit{Keywords}:\,\relax%
}}
\begin{document}
\begin{center}
\Large{Ef{\kern0pt}f{\kern0pt}icient weight vectors from pairwise comparison matrices} \\
\end{center}

\begin{center}
S\'andor BOZ\'OKI$^{\,\,1,2,3}$
\footnotetext[1]{corresponding author}
\footnotetext[2]{Laboratory on Engineering and Management
Intelligence, Research Group of Operations Research and Decision
Systems, Institute for Computer Science and Control, Hungarian
Academy of Sciences (MTA SZTAKI); Mail: 1518 Budapest, P.O.~Box 63, Hungary.}
\footnotetext[3]{Department of Operations Research and Actuarial Sciences, Corvinus University
of Budapest, Hungary
 \textit{E-mail: bozoki.sandor@sztaki.mta.hu}}
J\'anos F\"UL\"OP$^{\,\,2,4}$
\footnotetext[4]{Institute of Applied Mathematics, John von Neumann Faculty of Informatics, \'Obuda University, Hungary
 \textit{E-mail: fulop.janos@sztaki.mta.hu}}

\end{center}
\begin{abstract}
Pairwise comparison matrices are frequently applied in multi-criteria decision making.
A weight vector is called ef{\kern0pt}f{\kern0pt}icient if no other weight vector is at least as good in approximating
the elements of the pairwise comparison matrix, and strictly better in at least one position.
A weight vector is weakly ef{\kern0pt}f{\kern0pt}icient if the pairwise ratios
cannot be improved in all non-diagonal positions.
We show that the principal eigenvector is always weakly ef{\kern0pt}f{\kern0pt}icient, but numerical examples show that
it can be inef{\kern0pt}f{\kern0pt}icient.
The linear programs proposed test whether a given weight vector is (weakly) ef{\kern0pt}f{\kern0pt}icient,
and in case of (strong) inef{\kern0pt}f{\kern0pt}iciency, an ef{\kern0pt}f{\kern0pt}icient (strongly) dominating
weight vector is calculated.
The proposed algorithms are implemented in Pairwise Comparison Matrix Calculator,  available at
pcmc.online.

\keywords{multiple criteria analysis, decision support, pairwise comparison matrix, Pareto
optimality, ef{\kern0pt}f{\kern0pt}iciency, linear programming}
\end{abstract}
\section{Introduction} \label{section:1} 
\label{intro}

\subsection{Pairwise comparison matrices}
Pairwise comparison matrix \cite{Saaty1977} has been a popular tool in multiple criteria decision making,
for weighting the criteria and evaluating the alternatives with respect to every criterion. Decision makers
compare two criteria or two alternatives at a time and judge which one is more important or better, and how many times.
Formally, a pairwise comparison matrix is a positive matrix $\mathbf{A}$ of size $n \times n$, where $n \geq 3$ denotes
the number of items to compare. Reciprocity is assumed: $a_{ij} = 1/a_{ji}$ for all $1 \leq i,j \leq n.$
A pairwise comparison matrix is called consistent, if $a_{ij} a_{jk} = a_{ik} $ for all $i,j,k.$
Let $PCM_n$ denote the set of pairwise comparison matrices of size $n \times n.$
Once the decision maker provides all the $n(n-1)/2$ comparisons, the objective is to f{\kern0pt}ind a weight
vector $\mathbf{w}=(w_1, w_2, \ldots, w_n)^{\top} \in \mathbb{R}^n$ such that the pairwise ratios of
the weights, $w_i/w_j$, are \emph{as close as possible} to the matrix elements $a_{ij}$.
Several methods have been suggested for this weighting problem,
e.g.,
the eigenvector method \cite{Saaty1977},
the least squares method \cite{Bozoki2008,ChuKalabaSpingarn1979,Fulop2008,Jensen1984},
the logarithmic least squares method \cite{CrawfordWilliams1980,CrawfordWilliams1985,deGraan1980},
the spanning tree approach
\cite{BozokiTsyganok2017,LundySirajGreco2017,OlenkoTsyganok2016,SirajMikhailovKeane2012a,SirajMikhailovKeane2012b,Tsyganok2000,Tsyganok2010}
besides many other proposals discussed and compared by
Golany and Kress \cite{GolanyKress1993},
Choo and Wedley \cite{ChooWedley2004},
Lin \cite{Lin2007},
Fedrizzi and Brunelli \cite{FedrizziBrunelli2010}.
Bajwa, Choo and Wedley \cite{BajwaChooWedley2008} not only compare seven weighting methods
with respect to four criteria, but provide a detailed list of nine earlier comparative studies, too.

\subsection{Weighting as a multiple objective optimization problem}

The weighting problem itself can be considered as
a multi-objective optimization problem which includes $n^2-n$ objective functions,
namely $\left| x_i/x_j - a_{ij}  \right|$, $1 \leq i \neq j \leq n.$
Let  $\mathbf{A} = \left[ a_{ij} \right]_{i,j=1,\ldots,n}$
be a pairwise comparison matrix and write the multi-objective optimization
problem
\begin{equation}
\min\limits_{\mathbf{x} \in \mathbb{R}^n_{++}}
\left| \frac{x_i}{x_j} - a_{ij}  \right|_{1 \leq i \neq j \leq n}. \label{eq:xMultiObjectiveOptimizationProblem} 
\end{equation}

Ef{\kern0pt}f{\kern0pt}iciency or Pareto optimality \cite[Chapter 2]{Miettinen1998} is
a key concept in multiple objective optimization and multiple criteria decision making.
See Ehrgott's historical overview \cite{Ehrgott2012}, beginning with Edgeworth \cite{Edgeworth1881} and
Pareto \cite{Pareto1906}.

Consider the functions
\begin{equation}
f_{ij}: \mathbb{R}^n_{++}\to \mathbb{R},\  i,j=1,\dots , n,
\nonumber
\end{equation}
def{\kern0pt}ined by
\begin{equation}
f_{ij}(\mathbf{x})= \left| \frac{x_i}{x_j} - a_{ij}  \right|,\
i,j=1,\dots , n, \label{eq:fijx}
\end{equation}
as in Blanquero, Carrizosa, Conde \cite[p.273]{BlanqueroCarrizosaConde2006}.
Since $f_{ii}(\mathbf{x})=0$ for every $\mathbf{x} \in
\mathbb{R}^n_{++}$ and $i=1,\dots , n$, these constant functions
are irrelevant from the aspect of multi-objective optimization, so
they will be simply left out from the investigations.

Let the vector-valued function $\mathbf{f}: \mathbb{R}^n_{++}\to
\mathbb{R}^{n(n-1)}_{++}$ def{\kern0pt}ined by its components $f_{ij},
i,j=1,\dots , n,\, i\ne j$. Consider the problem of minimizing
$\mathbf{f}$ over a nonempty set $X\subseteq \mathbb{R}^n$ that
can be written in the general form of the
 \textit{vector optimization problem}
\begin{equation}
\min\limits_{\mathbf{x} \in X} \mathbf{f}(\mathbf{x}).
\label{eq:VOP}
\end{equation}
With $X=\mathbb{R}^n_{++}$, where the latter denotes the positive
orthant in $\mathbb{R}^n$, we get problem
(\ref{eq:xMultiObjectiveOptimizationProblem}) in a bit more
general form.

Recall the following basic concepts used for multiple objective or
vector optimization. A point $\mathbf{\bar x}\in X$ is said to be
an ef{\kern0pt}f{\kern0pt}icient solution of (\ref{eq:VOP}) if there is no
$\mathbf{x}\in X$ such that $\mathbf{f} (\mathbf{x})\le \mathbf{f}
(\mathbf{\bar x})$, $\mathbf{f}(\mathbf{x})\ne
\mathbf{f}(\mathbf{\bar x})$, meaning that $f_{ij}(\mathbf{x})\le
f_{ij}(\mathbf{\bar x})$ for all $i\ne j$ with strict inequality
for at least one index pair $i\ne j$. In the literature, the names
Pareto-optimal, nondominated and noninferior solution are also
used instead of ef{\kern0pt}f{\kern0pt}icient solution.

A point $\mathbf{\bar x}\in X$ is said to be a weakly ef{\kern0pt}f{\kern0pt}icient
solution of (\ref{eq:VOP}) if there is no $\mathbf{x}\in X$ such
that $\mathbf{f} (\mathbf{x})< \mathbf{f} (\mathbf{\bar x})$,
i.e.\ $f_{ij}(\mathbf{x})< f_{ij}(\mathbf{\bar x})$ for all $i\ne
j$. Ef{\kern0pt}f{\kern0pt}icient solutions are sometimes called strongly ef{\kern0pt}f{\kern0pt}icient.

A point $\mathbf{\bar x}\in X$ is said to be a  locally ef{\kern0pt}f{\kern0pt}icient
solution of (\ref{eq:VOP}) if there exists $\delta >0$ such that
$\mathbf{\bar x}$ is an ef{\kern0pt}f{\kern0pt}icient  solution in $X\cap
B(\mathbf{\bar x},\delta)$, where $B(\mathbf{\bar x},\delta)$ is a
$\delta$-neighborhood around $\mathbf{\bar x}$. The local weak
ef{\kern0pt}f{\kern0pt}iciency is def{\kern0pt}ined similarly for a point $\mathbf{\bar x}\in
X$, the only dif{\kern0pt}ference is that weakly ef{\kern0pt}f{\kern0pt}icient solutions are
considered instead of ef{\kern0pt}f{\kern0pt}icient solutions. \\

Several multi-objective optimization models have been proposed in the research of pairwise comparison matrices.
Departing from \cite{Mikhailov2006}, Mikhailov and Knowles \cite{MikhailovKnowles2009} include two objective functions,
the sum of least squares, written for the upper diagonal positions, and the number of minimum violations,
then apply an evolutionary algorithm to generate the Pareto frontier.
A third objective function, the total deviation from second-order indirect judgments, is added
in \cite{SirajMikhailovKeane2012c}.

The $n(n-1)/2$ objective functions $\left| x_i/x_j - a_{ij}  \right|$, $1 \leq i \neq j \leq n,$ of
the multi-objective optimization problem (\ref{eq:xMultiObjectiveOptimizationProblem})
can be aggregated into a single objective function in several ways. Their sum gives the weighting method
\emph{least absolute error} \cite[Section 4, LAE]{ChooWedley2004}.
If their maximum is taken into consideration, weighting method
\emph{least worst absolute error} \cite[Section 4, LWAE]{ChooWedley2004} is resulted in.
The sum of their squares is the classical \emph{least squares method} \cite{Bozoki2008,ChuKalabaSpingarn1979,Fulop2008,Jensen1984}.
A (parametric) linear combination of the sum and the maximum is proposed by Jones and Mardle \cite{JonesMardle2004}
to f{\kern0pt}ind a compromise weight vector.
A similar idea is applied in the proposal of Dopazo and Ruiz-Tagle \cite{DopazoRuiz-Tagle2011},
developed for group decision problems with incomplete pairwise comparison matrices.

In the rest of the paper ef{\kern0pt}f{\kern0pt}iciency for problem  (\ref{eq:xMultiObjectiveOptimizationProblem}),
including $n(n-1)/2$ objective functions, is considered.
The explicit presentation will be unavoidable for the problem specif{\kern0pt}ic
concept of internal ef{\kern0pt}f{\kern0pt}iciency introduced recently in
\cite{Bozoki2014}.

\subsection{Ef{\kern0pt}f{\kern0pt}iciency of weight vectors}

Let
$\mathbf{w} = (w_1, w_2, \ldots, w_n)^{\top}$ be a positive weight vector.

\begin{definition} \label{def:DefinitionEfficient}  
Weight vector $\mathbf{w}$ is called \emph{ef{\kern0pt}f{\kern0pt}icient} for  (\ref{eq:xMultiObjectiveOptimizationProblem})
if no positive weight vector
$\mathbf{w^{\prime}} = (w^{\prime}_1, w^{\prime}_2, \ldots, w^{\prime}_n)^{\top}$
exists such that
\begin{align}
 \left|a_{ij} - \frac{w^{\prime}_i}{w^{\prime}_j} \right| &\leq \left|a_{ij} - \frac{w_i}{w_j} \right| \qquad \text{ for all } 1 \leq i,j \leq n, \label{def:DefinitionEfficientProperty1}  \\   
 \left|a_{k{\ell}} - \frac{w^{\prime}_k}{w^{\prime}_{\ell}} \right| &<  \left|a_{k{\ell}} - \frac{w_k}{w_{\ell}} \right|  \qquad \text{ for some } 1 \leq k,\ell \leq n.  \label{def:DefinitionEfficientProperty2}    
\end{align}
\end{definition}
Weight vector $\mathbf{w}$ is called \emph{inef{\kern0pt}f{\kern0pt}icient} for  (\ref{eq:xMultiObjectiveOptimizationProblem}) if it is not ef{\kern0pt}f{\kern0pt}icient for  (\ref{eq:xMultiObjectiveOptimizationProblem}). \\

If weight vector $\mathbf{w}$ is inef{\kern0pt}f{\kern0pt}icient for  (\ref{eq:xMultiObjectiveOptimizationProblem}) and
weight vector $\mathbf{w^{\prime}}$ fulf{\kern0pt}ils (\ref{def:DefinitionEfficientProperty1})-(\ref{def:DefinitionEfficientProperty2}),
we say that $\mathbf{w^{\prime}}$ \emph{dominates} $\mathbf{w}$. Note that dominance is transitive.

It follows from the def{\kern0pt}inition that an arbitrary rescaling
does not inf{\kern0pt}luence (in)ef{\kern0pt}f{\kern0pt}iciency.
\begin{remark}  \label{remark:arbitrarynormalization}
A weight vector $\mathbf{w}$ is ef{\kern0pt}f{\kern0pt}icient for  (\ref{eq:xMultiObjectiveOptimizationProblem})
if and only if $c\mathbf{w}$ is ef{\kern0pt}f{\kern0pt}icient for  (\ref{eq:xMultiObjectiveOptimizationProblem}),
where $c >0$ is an arbitrary scalar.
\end{remark}

\begin{example}  \label{example:0a}
Consider four criteria $C_1, C_2, C_3, C_4,$ pairwise comparison matrix $\mathbf{A} \in PCM_4$ and its principal right
eigenvector $\mathbf{w}$ as follows:
\begin{equation*}
\mathbf{A} =
\begin{pmatrix}
$\,\,$   1  $\,\,$   & $\,\,$    1  $\,\,$   & $\,\,$    4   $\,\,$ & $\,\,$    9   $\,\,$     \\
$\,\,$   1  $\,\,$   & $\,\,$    1  $\,\,$   & $\,\,$    7   $\,\,$ & $\,\,$    5   $\,\,$     \\
$\,\,$  1/4 $\,\,$   & $\,\,$   1/7 $\,\,$   & $\,\,$    1   $\,\,$ & $\,\,$    4   $\,\,$     \\
$\,\,$  1/9 $\,\,$   & $\,\,$   1/5 $\,\,$   & $\,\,$   1/4  $\,\,$ & $\,\,$    1   $\,\,$
\end{pmatrix},
\qquad \qquad {\mathbf{w}} =
\begin{pmatrix}
0.404518  \\
0.436173 \\
0.110295  \\
0.049014
\end{pmatrix},
\qquad \qquad {\mathbf{w}^{\prime}} =
\begin{pmatrix}
\mathbf{0.441126}  \\
0.436173 \\
0.110295  \\
0.049014
\end{pmatrix}. 
\end{equation*}
In order to prove the inef{\kern0pt}f{\kern0pt}iciency of the
principal right eigenvector $\mathbf{w}$, let us increase its f{\kern0pt}irst
coordinate: ${w}^{\prime}_1 :=  9{w}_4  = 0.441126, \,
{w}^{\prime}_i := {w}_i, i =2,3,4.$ The consistent approximations generated
by weight vectors ${\mathbf{w}}, {\mathbf{w}}^{\prime},$
\begin{equation}
\left[ \frac{{w}_i}{{w}_j} \right] =
\begin{pmatrix}
$\,$   1    $\,\,$ & $\,\,$ 0.9274 $\,\,$ & $\,\,$ 3.6676 $\,\,$ & $\,\,$ 8.2531 $\,$    \\
$\,$ 1.0783 $\,\,$ & $\,\,$   1    $\,\,$ & $\,\,$ 3.9546 $\,\,$ & $\,\,$ 8.8989 $\,$    \\
$\,$ 0.2727 $\,\,$ & $\,\,$ 0.2529 $\,\,$ & $\,\,$   1    $\,\,$ & $\,\,$ 2.2503 $\,$   \\
$\,$ 0.1212 $\,\,$ & $\,\,$ 0.1124 $\,\,$ & $\,\,$ 0.4444 $\,\,$ & $\,\,$   1    $\,$
\end{pmatrix},   \label{example:0awiperwj}    
\end{equation}
\[
\left[ \frac{{w}^{\prime}_i}{{w}^{\prime}_j} \right] =
\begin{pmatrix}
$\,$   1    $\,\,$ & $\,\,$   \mathbf{1.0114}    $\,\,$ & $\,\,$ \mathbf{3.9995} $\,\,$ & $\,\,$ \mathbf{9} $\,$    \\
$\,$   \mathbf{0.9888}    $\,\,$ & $\,\,$   1    $\,\,$ & $\,\,$ 3.9546 $\,\,$ & $\,\,$ 8.8989 $\,$    \\
$\,$ \mathbf{0.2500} $\,\,$ & $\,\,$ 0.2529 $\,\,$ & $\,\,$   1    $\,\,$ & $\,\,$ 2.2503 $\,$   \\
$\,$ \mathbf{0.1111} $\,\,$ & $\,\,$ 0.1124 $\,\,$ & $\,\,$ 0.4444 $\,\,$ & $\,\,$   1    $\,$
\end{pmatrix},
        \]
show that inequality (\ref{def:DefinitionEfficientProperty1})
holds for all $1\leq i,j \leq 4$, and the strict inequality
(\ref{def:DefinitionEfficientProperty2}) holds for $(k,\ell) \in
\{(1,2), (1,3), (1,4), (2,1), (3,1), (4,1)  \}.$
For example, with $k=1, \ell=2, |\frac{{w}^{\prime}_1}{{w}^{\prime}_2} - a_{12}|
= |1.0114 - 1 | =0.0114 < |\frac{{w}_1}{{w}_2} - a_{12}| = |0.9274 - 1| = 0.0726.$
Weight vector ${\mathbf{w}}^{\prime}$ dominates ${\mathbf{w}}$.
Note that the principal right eigenvector $\mathbf{w}$ ranks the criteria as
$C_2 \succ C_1 \succ C_3 \succ C_4,$ while the dominating weight
vector $\mathbf{w}^{\prime}$ ranks them as  $C_1 \succ C_2 \succ C_3 \succ C_4.$
\end{example}

Blanquero et al. (2006) considered the local variant of ef{\kern0pt}f{\kern0pt}iciency:
\begin{definition} \label{def:DefinitionlocalEfficient} 
Weight vector $\mathbf{w}$ is called \emph{locally ef{\kern0pt}f{\kern0pt}icient}
 for (\ref{eq:xMultiObjectiveOptimizationProblem})
if there exists a neighborhood of $\mathbf{w}$, denoted by $V(\mathbf{w}),$
such that no positive weight vector $\mathbf{w^{\prime}} \in V(\mathbf{w}) $
fulf{\kern0pt}illing (\ref{def:DefinitionEfficientProperty1})-(\ref{def:DefinitionEfficientProperty2}) exists.
\end{definition}
Weight vector $\mathbf{w}$ is called \emph{locally inef{\kern0pt}f{\kern0pt}icient} if it is
not locally ef{\kern0pt}f{\kern0pt}icient. \\

Another variant of (in)ef{\kern0pt}f{\kern0pt}iciency has been introduced
by Boz\'oki (2014):
\begin{definition} \label{def:DefinitionInternallyEfficient} 
Weight vector $\mathbf{w}$ is called \emph{internally ef{\kern0pt}f{\kern0pt}icient}
 for (\ref{eq:xMultiObjectiveOptimizationProblem})
if no positive weight vector \linebreak
$\mathbf{w^{\prime}} = (w^{\prime}_1, w^{\prime}_2, \ldots, w^{\prime}_n)^{\top}$
exists such that
\begin{align}
\left.
\begin{array}{ccc}
a_{ij} \leq \frac{w_i}{w_j} & \Longrightarrow &
a_{ij} \leq \frac{w^{\prime}_i}{w^{\prime}_j} \leq \frac{w_i}{w_j} \\
a_{ij} \geq \frac{w_i}{w_j} & \Longrightarrow &
a_{ij} \geq \frac{w^{\prime}_i}{w^{\prime}_j} \geq \frac{w_i}{w_j}
\end{array}
\right\}
\qquad
\text{ for all } 1 \leq i,j \leq n, \label{def:DefinitionInternallyEfficientProperty1}  \\   
\left.
\begin{array}{ccc}
a_{k{\ell}} \leq \frac{w_k}{w_{\ell}} & \Longrightarrow &
\frac{w^{\prime}_k}{w^{\prime}_{\ell}} < \frac{w_k}{w_{\ell}} \\
a_{k{\ell}} \geq \frac{w_k}{w_{\ell}} & \Longrightarrow &
\frac{w^{\prime}_k}{w^{\prime}_{\ell}} > \frac{w_k}{w_{\ell}}
\end{array}
\right\}
\qquad
\text{ for some } 1 \leq k,\ell \leq n.  \label{def:DefinitionInternallyEfficientProperty2}    
\end{align}
\end{definition}
Weight vector $\mathbf{w}$ is called \emph{internally inef{\kern0pt}f{\kern0pt}icient}
if it is not internally ef{\kern0pt}f{\kern0pt}icient. \\

If weight vector $\mathbf{w}$ is inef{\kern0pt}f{\kern0pt}icient for  (\ref{eq:xMultiObjectiveOptimizationProblem}) and
weight vector $\mathbf{w^{\prime}}$ fulf{\kern0pt}ils (\ref{def:DefinitionInternallyEfficientProperty1})-(\ref{def:DefinitionInternallyEfficientProperty2}),
we say that $\mathbf{w^{\prime}}$ \emph{dominates} $\mathbf{w}$ \emph{internally}.
Note that internal dominance is transitive.

\begin{example}  \label{example:0b} 
Consider the pairwise comparison matrix $\mathbf{A} \in PCM_4$ of Example \ref{example:0a}
and its principal right eigenvector $\mathbf{w}$.
Now let us increase the f{\kern0pt}irst coordinate of $\mathbf{w}$ until it reaches the second one,
\[
\qquad \qquad {\mathbf{w}^{\prime\prime}} =
\begin{pmatrix}
\mathbf{0.436173}  \\
0.436173 \\
0.110295  \\
0.049014
\end{pmatrix}.
\]
The consistent approximation generated by weight vector ${\mathbf{w}}^{\prime\prime}$ is as follows:
\begin{equation} \label{example:0bwiperwj}  
\left[ \frac{{w}^{\prime\prime}_i}{{w}^{\prime\prime}_j} \right] =
\begin{pmatrix}
$\,$   1    $\,\,$ & $\,\,$   \mathbf{1}    $\,\,$ & $\,\,$ \mathbf{3.9546} $\,\,$ & $\,\,$ \mathbf{8.8989} $\,$    \\
$\,$ \mathbf{1}    $\,\,$ & $\,\,$   1    $\,\,$ & $\,\,$ 3.9546 $\,\,$ & $\,\,$ 8.8989 $\,$    \\
$\,$ \mathbf{0.2529} $\,\,$ & $\,\,$ 0.2529 $\,\,$ & $\,\,$   1    $\,\,$ & $\,\,$ 2.2503 $\,$   \\
$\,$ \mathbf{0.1124} $\,\,$ & $\,\,$ 0.1124 $\,\,$ & $\,\,$ 0.4444 $\,\,$ & $\,\,$   1    $\,$
\end{pmatrix}.
\end{equation}
Inequality (\ref{def:DefinitionInternallyEfficientProperty1})
holds for all $1 \leq i,j \leq 4$, and the strict inequality
(\ref{def:DefinitionInternallyEfficientProperty2}) holds for
\linebreak $(k,\ell) \in \{(1,2), (1,3), (1,4), (2,1), (3,1),
(4,1)  \}.$ Weight vector ${\mathbf{w}}^{\prime\prime}$  dominates
${\mathbf{w}}$ internally. Observe that weight vector
${\mathbf{w}}^{\prime}$ in Example \ref{example:0a} does not dominate
${\mathbf{w}}$ internally. Note that the internally dominating
weight vector $\mathbf{w}^{\prime\prime}$
ranks the criteria as  $C_1 \sim C_2 \succ C_3 \succ C_4.$ \\

The local inef{\kern0pt}f{\kern0pt}iciency of weight vector ${\mathbf{w}}$ can be checked by the
fact that weight vector $(w_1+\varepsilon,w_2,w_3,w_4)^{\top}$ dominates ${\mathbf{w}}$ for all
$\varepsilon < 2(w_2 - w_1) = 0.0633$, furthermore it dominates ${\mathbf{w}}$ internally for all
$\varepsilon < w_2 - w_1 = 0.0316$, providing the same ranking $C_2 \succ C_1 \succ C_3 \succ C_4$
as of the principal right eigenvector ${\mathbf{w}}$.
\end{example}

A natural question might arise. How can dominating weight vectors in Examples \ref{example:0a}--\ref{example:0b}
be found? We premise that algorithmic ways of f{\kern0pt}inding a dominating ef{\kern0pt}f{\kern0pt}icient weight vector
shall be given in details in Section \ref{section:4}.

It follows from the def{\kern0pt}initions that if
weight vector $\mathbf{w}$ is internally inef{\kern0pt}f{\kern0pt}icient, then
it is inef{\kern0pt}f{\kern0pt}icient.
Blanquero, Carrizosa and Conde proved that the two def{\kern0pt}initions are in fact equivalent:

\begin{theorem} \label{theorem:EfficientlocalEfficient}
\cite[Theorem 3]{BlanqueroCarrizosaConde2006}
Weight vector $\mathbf{w}$ is ef{\kern0pt}f{\kern0pt}icient
 for (\ref{eq:xMultiObjectiveOptimizationProblem})
if and only if it is locally ef{\kern0pt}f{\kern0pt}icient
 for (\ref{eq:xMultiObjectiveOptimizationProblem}),
i.e., Def{\kern0pt}initions \ref{def:DefinitionEfficient} and \ref{def:DefinitionlocalEfficient}
are equivalent.
\end{theorem}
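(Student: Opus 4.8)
The statement asserts that Definitions \ref{def:DefinitionEfficient} and \ref{def:DefinitionlocalEfficient} coincide. One implication needs nothing: if $\mathbf{w}$ is efficient, then \emph{no} positive weight vector fulfils (\ref{def:DefinitionEfficientProperty1})--(\ref{def:DefinitionEfficientProperty2}), so in particular none does inside any prescribed neighbourhood of $\mathbf{w}$, whence $\mathbf{w}$ is locally efficient. The content is in the converse, which I would establish in contrapositive form: \emph{if $\mathbf{w}$ is inefficient then it is locally inefficient}, i.e.\ dominating weight vectors can be found arbitrarily close to $\mathbf{w}$.

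So let $\mathbf{w}^{\prime}$ be a positive weight vector dominating $\mathbf{w}$ and put $\mathbf{w}(t) := (1-t)\,\mathbf{w} + t\,\mathbf{w}^{\prime}$ for $t\in[0,1]$. Each $\mathbf{w}(t)$ is a positive weight vector (a convex combination of positive vectors), $\mathbf{w}(0)=\mathbf{w}$, and $\mathbf{w}(t)\to\mathbf{w}$ as $t\to 0^{+}$; hence it is enough to prove that $\mathbf{w}(t)$ dominates $\mathbf{w}$ for \emph{every} $t\in(0,1]$. The key elementary fact is that, for each $i\neq j$, the ratio
\[
r_{ij}(t) := \frac{w_i(t)}{w_j(t)} = \frac{(1-t)\,w_i + t\,w_i^{\prime}}{(1-t)\,w_j + t\,w_j^{\prime}}
\]
is a quotient of two affine functions of $t$ with strictly positive denominator on $[0,1]$, and a direct computation gives $r_{ij}^{\prime}(t) = \bigl(w_i^{\prime}w_j - w_i w_j^{\prime}\bigr)\big/\bigl((1-t)w_j + t w_j^{\prime}\bigr)^{2}$, whose sign does not depend on $t$. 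Thus $r_{ij}$ is monotone on $[0,1]$, so $r_{ij}(t)$ always lies in the closed interval with endpoints $w_i/w_j$ and $w_i^{\prime}/w_j^{\prime}$; consequently $r_{ij}(t) = \bigl(1-\lambda_{ij}(t)\bigr)\,w_i/w_j + \lambda_{ij}(t)\,w_i^{\prime}/w_j^{\prime}$ for some $\lambda_{ij}(t)\in[0,1]$ with $\lambda_{ij}(0)=0$, and whenever $w_i^{\prime}/w_j^{\prime}\neq w_i/w_j$ the monotonicity is strict and $\lambda_{ij}(t)>0$ for all $t\in(0,1]$.

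With this in hand, the triangle inequality gives, for every $i\neq j$ and $t\in[0,1]$,
\[
f_{ij}\bigl(\mathbf{w}(t)\bigr) = \bigl| a_{ij} - r_{ij}(t)\bigr| \;\le\; \bigl(1-\lambda_{ij}(t)\bigr) f_{ij}(\mathbf{w}) + \lambda_{ij}(t)\,f_{ij}(\mathbf{w}^{\prime}) \;\le\; f_{ij}(\mathbf{w}),
\]
since $f_{ij}(\mathbf{w}^{\prime})\le f_{ij}(\mathbf{w})$ and $\lambda_{ij}(t)\in[0,1]$; this is (\ref{def:DefinitionEfficientProperty1}) along the whole segment. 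Next pick $(k,\ell)$ with $f_{k\ell}(\mathbf{w}^{\prime})<f_{k\ell}(\mathbf{w})$, which exists because $\mathbf{w}^{\prime}$ dominates $\mathbf{w}$. Then $f_{k\ell}(\mathbf{w})>0$, so $w_k/w_\ell\neq a_{k\ell}$ and also $w_k^{\prime}/w_\ell^{\prime}\neq w_k/w_\ell$; hence $\lambda_{k\ell}(t)>0$ for every $t\in(0,1]$, and the middle expression above becomes strictly smaller than $\bigl(1-\lambda_{k\ell}(t)\bigr) f_{k\ell}(\mathbf{w}) + \lambda_{k\ell}(t) f_{k\ell}(\mathbf{w}) = f_{k\ell}(\mathbf{w})$. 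So (\ref{def:DefinitionEfficientProperty2}) holds strictly at $(k,\ell)$ for every $t\in(0,1]$, which means $\mathbf{w}(t)$ dominates $\mathbf{w}$ for all such $t$; this proves the contrapositive, and the theorem follows.

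I do not expect a genuine obstacle here: once one observes that a quotient of two affine functions is monotone, the rest is a one-variable triangle-inequality estimate. The single delicate point is precisely that monotonicity — it is what forces $\lambda_{ij}(t)\in[0,1]$ and, for the distinguished coordinate $(k,\ell)$, what yields $\lambda_{k\ell}(t)>0$ on \emph{all} of $(0,1]$, so the strict improvement survives arbitrarily close to $\mathbf{w}$; mere continuity of $r_{k\ell}$ would place it only near $t=1$.
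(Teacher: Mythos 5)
Your argument is correct, and it is worth noting how it sits relative to the paper: the paper states this theorem with a citation to Blanquero--Carrizosa--Conde and only reproves the equivalence later, in Proposition~\ref{proposition:EandWEandEquivalentForms}, where the inclusion $E_L\subseteq E$ is obtained from the convexity of the dominating set $D(\mathbf{w})$ (Proposition~\ref{proposition:DandSDareConvex}), which in turn is established by clearing denominators so that dominance becomes the system of linear inequalities (\ref{eq:Dw1})--(\ref{eq:Dw2}) with $\mathbf{w}$ sitting on the boundary; the segment $[\mathbf{\hat x},\mathbf{w})$ then stays inside $D(\mathbf{w})$ and meets every neighborhood of $\mathbf{w}$. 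You use the very same segment idea but verify the containment directly in ratio form: each $r_{ij}(t)$ is a linear-fractional, hence monotone, function of $t$, so it is a convex combination of the endpoint ratios, and the triangle inequality transports the weak inequalities (\ref{def:DefinitionEfficientProperty1}) to all of $(0,1]$ and the strict inequality (\ref{def:DefinitionEfficientProperty2}) as well, since $f_{k\ell}(\mathbf{w}')<f_{k\ell}(\mathbf{w})$ forces $\lambda_{k\ell}(t)>0$ there. The two routes buy different things: yours is self-contained and in fact shows the whole open segment consists of dominating vectors without ever introducing $D(\mathbf{w})$, while the paper's linearization is reused later as the backbone of the LP tests in Sections~\ref{section:4} and~\ref{section:5}, so the convexity detour is not wasted effort in their development. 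Your key steps (constancy of the sign of $r_{ij}'(t)$, the deduction $w_k'/w_\ell'\neq w_k/w_\ell$ from strict improvement, and the strictness surviving down to $t\to 0^+$) are all sound, so there is no gap.
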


\begin{proposition} \label{proposition:EfficientInternallyEfficient}
Weight vector $\mathbf{w}$ is ef{\kern0pt}f{\kern0pt}icient
 for (\ref{eq:xMultiObjectiveOptimizationProblem})
 if and only if it is internally ef{\kern0pt}f{\kern0pt}icient
 for (\ref{eq:xMultiObjectiveOptimizationProblem}),
i.e., Def{\kern0pt}initions \ref{def:DefinitionEfficient} and \ref{def:DefinitionInternallyEfficient}
are equivalent.
\end{proposition}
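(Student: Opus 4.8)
The plan is to prove the two implications separately; one is routine and one carries the content. The routine direction is internal inefficiency $\Rightarrow$ inefficiency, equivalently efficiency $\Rightarrow$ internal efficiency, which (as already observed above) is immediate: if $\mathbf{w}''$ satisfies (\ref{def:DefinitionInternallyEfficientProperty1})--(\ref{def:DefinitionInternallyEfficientProperty2}) then each ratio $w''_i/w''_j$ lies between $a_{ij}$ and $w_i/w_j$, so $|a_{ij}-w''_i/w''_j|\le|a_{ij}-w_i/w_j|$ for every pair, with a strict inequality at a pair witnessing (\ref{def:DefinitionInternallyEfficientProperty2}); hence $\mathbf{w}''$ also dominates $\mathbf{w}$ in the sense of Definition~\ref{def:DefinitionEfficient}. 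The substance is the converse: \emph{if $\mathbf{w}$ is inefficient, then it is internally inefficient}.

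So assume a positive $\mathbf{w}'$ satisfies (\ref{def:DefinitionEfficientProperty1})--(\ref{def:DefinitionEfficientProperty2}). I would pass to logarithmic coordinates and interpolate: for $t\in[0,1]$ set $w_i(t)=w_i^{\,1-t}(w'_i)^{t}$, so that $\log\!\big(w_i(t)/w_j(t)\big)=(1-t)\log(w_i/w_j)+t\log(w'_i/w'_j)$ is affine in $t$ and hence each pairwise ratio $r_{ij}(t):=w_i(t)/w_j(t)$ moves \emph{monotonically} from $w_i/w_j$ at $t=0$ to $w'_i/w'_j$ at $t=1$. Inequality (\ref{def:DefinitionEfficientProperty1}) says $|a_{ij}-w'_i/w'_j|\le|a_{ij}-w_i/w_j|$ for every pair; in particular $w'_i/w'_j$ is never farther from $a_{ij}$ than $w_i/w_j$, so if $w'_i/w'_j$ fails to lie (inclusively) between $a_{ij}$ and $w_i/w_j$ it must be strictly on the opposite side of $a_{ij}$ from $w_i/w_j$. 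Let $O$ be the set of pairs where this happens; for $(i,j)\in O$ the monotone path $r_{ij}(\cdot)$ takes the value $a_{ij}$ at a unique $t_{ij}\in(0,1)$. Put $t^{*}=\min\big(\{1\}\cup\{t_{ij}:(i,j)\in O\}\big)\in(0,1]$ and $\mathbf{w}'':=\mathbf{w}(t^{*})$, a positive vector.

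It then remains to verify that $\mathbf{w}''$ dominates $\mathbf{w}$ internally. For (\ref{def:DefinitionInternallyEfficientProperty1}), fix a pair $(i,j)$. If $(i,j)\notin O$, then $w'_i/w'_j$ lies between $a_{ij}$ and $w_i/w_j$, hence so does the entire monotone segment traced by $r_{ij}$ on $[0,1]$, in particular $r_{ij}(t^{*})$. If $(i,j)\in O$, then $t^{*}\le t_{ij}$, so $r_{ij}$ --- monotone, starting at $w_i/w_j$ on one side of $a_{ij}$ --- has not yet reached $a_{ij}$ at time $t^{*}$, whence $r_{ij}(t^{*})$ again lies between $a_{ij}$ and $w_i/w_j$; either way (\ref{def:DefinitionInternallyEfficientProperty1}) holds at $(i,j)$. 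For (\ref{def:DefinitionInternallyEfficientProperty2}), choose a pair $(k,\ell)$ witnessing (\ref{def:DefinitionEfficientProperty2}); then $w_k/w_\ell\ne a_{k\ell}$ and $w'_k/w'_\ell\ne w_k/w_\ell$ (else the inequality in (\ref{def:DefinitionEfficientProperty2}) fails), so $r_{k\ell}(\cdot)$ is \emph{strictly} monotone and moves from $w_k/w_\ell$ toward $a_{k\ell}$. Since $t^{*}>0$ and $t^{*}\le t_{k\ell}$ whenever $(k,\ell)\in O$, the value $r_{k\ell}(t^{*})$ has strictly moved from $w_k/w_\ell$ in the direction of $a_{k\ell}$ without passing it, which is precisely the strict condition (\ref{def:DefinitionInternallyEfficientProperty2}) at $(k,\ell)$. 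Hence $\mathbf{w}''$ internally dominates $\mathbf{w}$, so $\mathbf{w}$ is internally inefficient, completing the proof. I expect the only delicate point to be this last step: one must be sure the truncated interpolant still strictly improves some position, which works because cutting the interpolation off at $t^{*}>0$ retains a strictly positive, correctly oriented fraction of the improvement that $\mathbf{w}'$ produced at $(k,\ell)$, while the choice of $t^{*}$ as the first overshoot time is exactly what keeps every ratio from crossing its matrix entry --- the sole feature distinguishing internal dominance from ordinary dominance. The remaining verifications reduce to a routine sign analysis over the cases $w_i/w_j>a_{ij}$, $w_i/w_j<a_{ij}$, and $w_i/w_j=a_{ij}$.
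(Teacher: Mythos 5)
Your proof is correct, but it takes a genuinely different route from the paper. The paper deduces the hard direction (inefficiency $\Rightarrow$ internal inefficiency) from the Blanquero--Carrizosa--Conde result that efficiency and local efficiency coincide (Theorem \ref{theorem:EfficientlocalEfficient}): local inefficiency supplies a dominating vector arbitrarily close to $\mathbf{w}$, and for a sufficiently close dominating vector no ratio $w'_i/w'_j$ can cross $a_{ij}$, so the dominance is automatically internal. You instead work with an arbitrary dominating $\mathbf{w}'$, interpolate geometrically, $w_i(t)=w_i^{1-t}(w'_i)^t$, so that each $\log r_{ij}(t)$ is affine and hence monotone, and truncate at the first time $t^*$ any overshooting ratio reaches its entry $a_{ij}$; the case analysis you give (pairs in and out of the overshoot set $O$, and strictness at a witness pair, where $t^*>0$ and strict monotonicity of $r_{k\ell}$ guarantee a strict move toward $a_{k\ell}$ without crossing) is sound, including the degenerate case $a_{ij}=w_i/w_j$, where dominance forces the ratio to stay fixed. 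What your argument buys is self-containedness and constructiveness: it does not invoke the cited local-efficiency theorem and it exhibits an explicit internally dominating vector even when the given dominator is far from $\mathbf{w}$; the paper's proof is shorter but leans on the external result. Your idea is in fact close in spirit to the paper's later segment argument in Proposition \ref{proposition:EandWEandEquivalentForms} (convexity of $D(\mathbf{w})$ plus points of $[\mathbf{\hat x},\mathbf{w})$ near $\mathbf{w}$), except that you interpolate multiplicatively and cut at the first crossing time rather than passing to a small neighborhood.
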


\begin{proof}
Suf{\kern0pt}f{\kern0pt}iciency follows by def{\kern0pt}inition. For necessity, it is more
convenient to show that inef{\kern0pt}f{\kern0pt}iciency implies internal inef{\kern0pt}f{\kern0pt}iciency.
Let weight vector $\mathbf{w}$ be inef{\kern0pt}f{\kern0pt}icient.
Theorem \ref{theorem:EfficientlocalEfficient} implies that
$\mathbf{w}$ is locally inef{\kern0pt}f{\kern0pt}icient as well, i.e.,
there exists $\mathbf{w^{\prime}}$ in any neighborhood $U(\mathbf{w})$
such that $\mathbf{w^{\prime}}$ dominates $\mathbf{w}.$
If $U(\mathbf{w})$ is suf{\kern0pt}f{\kern0pt}iciently small, then
\begin{align} \label{eq:wprimedominatesw} 
\begin{array}{ccc}
a_{ij} < \frac{w_i}{w_j} & \Longrightarrow &
a_{ij} < \frac{w^{\prime}_i}{w^{\prime}_j} \leq \frac{w_i}{w_j} \\[2mm]
a_{ij} > \frac{w_i}{w_j} & \Longrightarrow &
a_{ij} > \frac{w^{\prime}_i}{w^{\prime}_j} \geq \frac{w_i}{w_j} \\[2mm]
a_{ij} = \frac{w_i}{w_j} & \Longrightarrow &
a_{ij} = \frac{w^{\prime}_i}{w^{\prime}_j} = \frac{w_i}{w_j},
\end{array}
\end{align}
implying that $\mathbf{w}$ is internally inef{\kern0pt}f{\kern0pt}icient.
\end{proof}

\begin{corollary} \label{corollary:EfficiencyDefinitionsAreEquivalent}
Ef{\kern0pt}f{\kern0pt}iciency (Def{\kern0pt}inition \ref{def:DefinitionEfficient}),
local ef{\kern0pt}f{\kern0pt}iciency (Def{\kern0pt}inition \ref{def:DefinitionlocalEfficient}) and
internal ef{\kern0pt}f{\kern0pt}iciency (Def{\kern0pt}inition \ref{def:DefinitionInternallyEfficient}) are equivalent.
\end{corollary}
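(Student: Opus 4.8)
The plan is to obtain the three-way equivalence by simply chaining the two two-sided equivalences already at our disposal. Theorem~\ref{theorem:EfficientlocalEfficient} (the result of Blanquero, Carrizosa and Conde) states that a weight vector is efficient for (\ref{eq:xMultiObjectiveOptimizationProblem}) if and only if it is locally efficient, and Proposition~\ref{proposition:EfficientInternallyEfficient} states that it is efficient if and only if it is internally efficient. Since logical equivalence is transitive, combining the two yields that local efficiency, efficiency and internal efficiency are mutually equivalent, which is exactly the assertion of the corollary.

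Concretely, I would phrase the argument as a short cycle of implications. First, internal efficiency implies efficiency: this is immediate from the definitions, as already remarked before Theorem~\ref{theorem:EfficientlocalEfficient}, since an internally dominating vector satisfies (\ref{def:DefinitionEfficientProperty1})--(\ref{def:DefinitionEfficientProperty2}). Second, efficiency implies local efficiency: trivially, if no positive weight vector dominates $\mathbf{w}$, then in particular none does inside any prescribed neighborhood $V(\mathbf{w})$. Third, local efficiency implies internal efficiency: this is the only step carrying content, and it is precisely what the proof of Proposition~\ref{proposition:EfficientInternallyEfficient} establishes --- a dominating vector sufficiently close to $\mathbf{w}$ automatically respects the sidedness constraints (\ref{def:DefinitionInternallyEfficientProperty1})--(\ref{def:DefinitionInternallyEfficientProperty2}), because the strict inequalities $a_{ij} < w_i/w_j$ and $a_{ij} > w_i/w_j$ are preserved under a small perturbation, and the equalities $a_{ij} = w_i/w_j$ cannot be left by an improving move. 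Closing the cycle gives the equivalence.

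There is no real obstacle here: the corollary is a bookkeeping consequence of Theorem~\ref{theorem:EfficientlocalEfficient} and Proposition~\ref{proposition:EfficientInternallyEfficient}, and all the genuine work --- the passage from global to local (in)efficiency --- has already been done, either cited from \cite{BlanqueroCarrizosaConde2006} or carried out in the proof of the proposition. The one point worth stating carefully, if one prefers a self-contained cyclic argument to an appeal to transitivity, is that ``local efficiency $\Rightarrow$ internal efficiency'' reuses verbatim the small-neighborhood reasoning of Proposition~\ref{proposition:EfficientInternallyEfficient}; everything else is definitional.
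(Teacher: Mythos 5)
Your first paragraph---obtaining the three-way equivalence by chaining Theorem~\ref{theorem:EfficientlocalEfficient} and Proposition~\ref{proposition:EfficientInternallyEfficient} through transitivity---is exactly how the paper treats the corollary, which it states without separate proof, so the result is established. However, in your ``concrete'' cycle the easy and hard directions are swapped. The definitional observation that an internally dominating vector satisfies (\ref{def:DefinitionEfficientProperty1})--(\ref{def:DefinitionEfficientProperty2}) gives ``internally inefficient $\Rightarrow$ inefficient'', i.e.\ ``efficient $\Rightarrow$ internally efficient''; it does \emph{not} give your step~1, ``internally efficient $\Rightarrow$ efficient'', which is precisely the substantive direction proved in Proposition~\ref{proposition:EfficientInternallyEfficient}. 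Likewise, the small-neighborhood sidedness argument shows that a dominator sufficiently close to $\mathbf{w}$ dominates it internally, i.e.\ ``locally inefficient $\Rightarrow$ internally inefficient'', equivalently ``internally efficient $\Rightarrow$ locally efficient''---the converse of your step~3, ``locally efficient $\Rightarrow$ internally efficient'', whose proof in fact needs the nontrivial direction of Theorem~\ref{theorem:EfficientlocalEfficient}. A correctly oriented self-contained cycle would be: efficient $\Rightarrow$ internally efficient (definitional), internally efficient $\Rightarrow$ locally efficient (small-neighborhood sidedness), locally efficient $\Rightarrow$ efficient (Theorem~\ref{theorem:EfficientlocalEfficient}, cited). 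Since your transitivity argument already suffices, this is a mislabeling rather than a fatal gap, but as written the cyclic version attaches each justification to the wrong arrow and should not be kept in that form.
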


\begin{definition} \label{def:DirectedGraphEfficient}  
Let $\mathbf{A} =
\left[
a_{ij}
\right]_{i,j=1,\ldots,n} \in PCM_n$  and
$\mathbf{w} = (w_1, w_2, \ldots, w_n)^{T}$ be a positive weight vector.
 A directed graph $G=(V,\overrightarrow{E})_{\mathbf{A},\mathbf{w}}$ is def{\kern0pt}ined as follows:
$V=\{1,2,\ldots,n\} $ and
\[
 \overrightarrow{E} = \left\{ \text{arc(}  i \rightarrow j\text{)}   \left|  \frac{w_i}{w_j} \geq a_{ij}, i \neq j \right. \right\}.
\]
\end{definition}
It follows from Def{\kern0pt}inition \ref{def:DirectedGraphEfficient} that if $w_i/w_j=a_{ij}$, then there is a bidirected arc between nodes $i,j$.
The fundamental theorem of Blanquero, Carrizosa and Conde using the directed graph representation above is as follows:
\begin{theorem}[{\cite[Corollary 10]{BlanqueroCarrizosaConde2006}}] \label{thm:TheoremDirectedGraphEfficient}
Let $\mathbf{A} \in {PCM}_n$.
A weight vector $\mathbf{w}$ is ef{\kern0pt}f{\kern0pt}icient
 for (\ref{eq:xMultiObjectiveOptimizationProblem})
if and only if
$G=(V,\overrightarrow{E})_{\mathbf{A},\mathbf{w}}$
is  a strongly connected digraph, that is,
there exist directed paths from $i$ to $j$ and from $j$ to $i$ for all
pairs of  nodes  $i , j$.
\end{theorem}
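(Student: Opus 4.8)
\emph{Proof proposal.} My plan is to prove the two implications of the equivalence separately, using the fact that $G=(V,\overrightarrow{E})_{\mathbf{A},\mathbf{w}}$ contains, for every pair $i\ne j$, at least one of the arcs $i\to j$ and $j\to i$ (and both exactly when $w_i/w_j=a_{ij}$), together with Corollary~\ref{corollary:EfficiencyDefinitionsAreEquivalent}, which lets me work with internal efficiency whenever that is more convenient. For a candidate competitor $\mathbf{w}'$ I will record the increments $\delta_i:=\ln w'_i-\ln w_i$; then $w'_i/w'_j\le w_i/w_j$ is the same as $\delta_i\le\delta_j$, and this is the bridge between the graph and the weights.

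For the implication ``$G$ strongly connected $\Rightarrow$ $\mathbf{w}$ efficient'' I would argue by contradiction and, via Corollary~\ref{corollary:EfficiencyDefinitionsAreEquivalent}, assume instead that some $\mathbf{w}'$ internally dominates $\mathbf{w}$. For each arc $i\to j$ of $G$ we have $w_i/w_j\ge a_{ij}$, so the first implication of (\ref{def:DefinitionInternallyEfficientProperty1}) forces $w'_i/w'_j\le w_i/w_j$, i.e.\ $\delta_i\le\delta_j$. Strong connectivity then lets me chain these inequalities along a directed path from $p$ to $q$ and back, giving $\delta_p=\delta_q$ for all $p,q$; hence $\mathbf{w}'=c\mathbf{w}$ for some $c>0$, so $w'_k/w'_\ell=w_k/w_\ell$ for every $k,\ell$, which makes (\ref{def:DefinitionInternallyEfficientProperty2}) unsatisfiable. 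This is the desired contradiction, so $\mathbf{w}$ is internally efficient and therefore efficient.

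For the converse I would show that if $G$ is not strongly connected then $\mathbf{w}$ is inefficient. Not being strongly connected, $G$ has a node $v$ whose set of reachable nodes $S$ is a proper nonempty subset of $V$; by construction no arc leaves $S$, so for every $i\in S$ and $j\notin S$ the arc $i\to j$ is absent, i.e.\ $w_i/w_j<a_{ij}$ strictly (equivalently $w_j/w_i>a_{ji}$ strictly). I then dilate $S$: set $w'_i:=t\,w_i$ for $i\in S$ and $w'_j:=w_j$ for $j\notin S$, with $t>1$. Ratios internal to $S$ and internal to $V\setminus S$ are untouched, while for a cross pair the new ratios $t\,w_i/w_j$ and $t^{-1}w_j/w_i$ move toward $a_{ij}$ and $a_{ji}$ without overshooting as long as $1<t<a_{ij}w_j/w_i$; taking $t$ below the minimum of these finitely many thresholds (each $>1$) yields a $\mathbf{w}'$ that weakly improves every objective and strictly improves the objectives on every cross pair, of which at least one exists since $\emptyset\ne S\subsetneq V$. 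Thus $\mathbf{w}'$ dominates $\mathbf{w}$, so $\mathbf{w}$ is inefficient.

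I expect the only real care to be needed in the second implication: choosing one dilation factor $t$ that works simultaneously for all cross pairs. This is handled because each per-pair threshold strictly exceeds $1$, and it does so precisely because the inequality $w_i/w_j<a_{ij}$ is strict — which is exactly the information encoded by the absence of the arc $i\to j$. Everything else is the routine dictionary between arcs of $G$, the sign of $w_i/w_j-a_{ij}$, and (through logarithms) the ordering of the increments $\delta_i$.
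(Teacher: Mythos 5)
Your proof is correct, but note that the paper itself does not prove this statement at all: it imports it verbatim as \cite[Corollary~10]{BlanqueroCarrizosaConde2006}, so there is no in-paper argument to compare against. What you supply is a self-contained elementary proof, and both halves check out. For the direction ``strongly connected $\Rightarrow$ efficient'', an arc $i\to j$ means $w_i/w_j\ge a_{ij}$, any (internally) dominating $\mathbf{w}'$ must then satisfy $w'_i/w'_j\le w_i/w_j$, i.e.\ $\delta_i\le\delta_j$, and chaining along directed paths in both directions forces $\mathbf{w}'=c\mathbf{w}$, which kills the required strict improvement; your appeal to Corollary~\ref{corollary:EfficiencyDefinitionsAreEquivalent} is legitimate and not circular, since the paper proves the equivalence of the efficiency notions independently of the graph characterization (Proposition~\ref{proposition:EandWEandEquivalentForms}), though the detour through internal dominance is not even needed—ordinary dominance already gives $w'_i/w'_j\le w_i/w_j$ along arcs because $a_{ij}\le w_i/w_j$ and $\lvert a_{ij}-w'_i/w'_j\rvert\le w_i/w_j-a_{ij}$ force it. For the converse, taking $S$ to be the reachable set of a suitable vertex, noting that no arc leaves $S$ (so $w_i/w_j<a_{ij}$ strictly for $i\in S$, $j\notin S$), and dilating the $S$-weights by a common factor $1<t\le\min_{i\in S,\,j\notin S}a_{ij}w_j/w_i$ strictly improves every cross pair while leaving the rest untouched, which is exactly a dominance certificate. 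It is worth observing that your two ingredients mirror the techniques the paper does use for its weak-efficiency analogue, Lemma~\ref{lemma:acyclic}: there, the necessity part multiplies inequalities around a directed cycle (your $\delta$-chaining in multiplicative form), and the sufficiency part rescales coordinates by carefully bounded multipliers $p_j$ (your uniform dilation of $S$). So your argument is a genuine, correct replacement for the outsourced citation, in the same spirit as the paper's own toolkit.
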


Blanquero, Carrizosa and Conde {\cite[Remark 12]{BlanqueroCarrizosaConde2006}}
and Conde and P\'erez \cite[Theorem 2.2]{CondePerez2010} consider weak ef{\kern0pt}f{\kern0pt}iciency as follows:
\begin{definition} \label{def:DefinitionWeaklyEfficient}  
Weight vector $\mathbf{w}$ is called \emph{weakly
ef{\kern0pt}f{\kern0pt}icient}
 for (\ref{eq:xMultiObjectiveOptimizationProblem})
 if no positive weight vector
$\mathbf{w^{\prime}} = (w^{\prime}_1, w^{\prime}_2, \ldots,
w^{\prime}_n)^{\top}$ exists such that
\begin{align}
 \left|a_{ij} - \frac{w^{\prime}_i}{w^{\prime}_j} \right| &< \left|a_{ij} - \frac{w_i}{w_j} \right| \qquad \text{ for all } 1 \leq i \neq j \leq n, \label{def:WeakEfficiency}    
\end{align}
and weight vector $\mathbf{w}$ is called \emph{strongly inef{\kern0pt}f{\kern0pt}icient} if it is not weakly ef{\kern0pt}f{\kern0pt}icient. \\
\end{definition}

If weight vector $\mathbf{w}$ is strongly inef{\kern0pt}f{\kern0pt}icient
for  (\ref{eq:xMultiObjectiveOptimizationProblem}) and weight vector
$\mathbf{w^{\prime}}$ fulf{\kern0pt}ils (\ref{def:WeakEfficiency}),
we say that $\mathbf{w^{\prime}}$ \emph{strongly dominates} $\mathbf{w}$.
Note that strong dominance is transitive.

\begin{example}  \label{example:1}
Let $n \geq 3$ integer and $c, d > 0, c \neq d$ arbitrary. Let
$\mathbf{A} \in PCM_n$ be a consistent pairwise comparison matrix def{\kern0pt}ined as
$a_{ij} = c^{j-i}, \, \, i,j=1,\ldots,n$. Let weight vector
$\mathbf{w}$ be def{\kern0pt}ined by $w_i = d^{n+1-i}, i=1,\ldots,n$. Then
weight vector $\mathbf{w^{\prime}}$, def{\kern0pt}ined by $w^{\prime}_i =
c^{n+1-i}, i=1,\ldots,n$ provides strictly better approximation to
all non-diagonal elements of $\mathbf{A}$ than $\mathbf{w}$ does,
therefore $\mathbf{w}$ is strongly
inef{\kern0pt}f{\kern0pt}icient. The example is specif{\kern0pt}ied for
$n=4, c=2, d=3$ below.
\begin{align*}
\mathbf{A} =  \left[\frac{w^{\prime}_i}{w^{\prime}_j}\right]_{i,j=1,\ldots,4} &=
\begin{pmatrix}
$\,\,$   1   $\,\,$ & $\,\,$   2   $\,\,$ & $\,\,$   4   $\,\,$ & $\,\,$   8   $\,\,$   \\
$\,\,$  1/2  $\,\,$ & $\,\,$   1   $\,\,$ & $\,\,$   2   $\,\,$ & $\,\,$   4   $\,\,$   \\
$\,\,$  1/4  $\,\,$ & $\,\,$  1/2  $\,\,$ & $\,\,$   1   $\,\,$ & $\,\,$   2   $\,\,$   \\
$\,\,$  1/8  $\,\,$ & $\,\,$  1/4  $\,\,$ & $\,\,$  1/2  $\,\,$ & $\,\,$   1   $\,\,$
\end{pmatrix},
\qquad
\qquad
\mathbf{w^{\prime}} =
\begin{pmatrix}
8  \\
4  \\
2  \\
1
\end{pmatrix},    \\
\left[\frac{w_i}{w_j}\right]_{i,j=1,\ldots,4} &=
\begin{pmatrix}
$\,\,$   1   $\,\,$ & $\,\,$   3   $\,\,$ & $\,\,$   9   $\,\,$ & $\,\,$  27   $\,\,$   \\
$\,\,$  1/3  $\,\,$ & $\,\,$   1   $\,\,$ & $\,\,$   3   $\,\,$ & $\,\,$   9   $\,\,$   \\
$\,\,$  1/9  $\,\,$ & $\,\,$  1/3  $\,\,$ & $\,\,$   1   $\,\,$ & $\,\,$   3   $\,\,$   \\
$\,\,$  1/27 $\,\,$ & $\,\,$  1/9  $\,\,$ & $\,\,$  1/3  $\,\,$ & $\,\,$   1   $\,\,$
\end{pmatrix},
\qquad
\quad
\mathbf{w} =
\begin{pmatrix}
27  \\
9   \\
3   \\
1
\end{pmatrix}.
\end{align*}
\end{example}

\subsection{Ef{\kern0pt}f{\kern0pt}iciency and distance minimization}

Distance minimization does not necessarily induce ef{\kern0pt}f{\kern0pt}iciency.
Blanquero, Carrizosa and Conde \cite{BlanqueroCarrizosaConde2006}
and Fedrizzi \cite{Fedrizzi2013} showed that if the metric is
componentwise strictly increasing, then ef{\kern0pt}f{\kern0pt}iciency is implied.
\begin{definition} (\cite{Fedrizzi2013})
A metric $D: PCM_n \times PCM_n \rightarrow \mathbb{R}$ is called \emph{strictly monotonic}, if
$\left| a_{ij} - \frac{x_i}{x_j} \right| \leq \left| a_{ij} - \frac{y_i}{y_j} \right|$
for all $(i,j)$ and the inequality is strict for at least one pair of indices $(i,j)$, imply that
$ D(\mathbf{A}, \left[ \frac{x_i}{x_j} \right]) < D(\mathbf{A}, \left[ \frac{y_i}{y_j} \right])$.
\end{definition}
\begin{theorem} \label{thm:StrictlyMonotonicDistance} (\cite[Section 2]{BlanqueroCarrizosaConde2006},\cite{Fedrizzi2013})
A weight vector induced by a strictly monotonic metric is ef{\kern0pt}f{\kern0pt}icient
 for (\ref{eq:xMultiObjectiveOptimizationProblem}).
\end{theorem}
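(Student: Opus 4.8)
The plan is to run the classical ``scalarization implies efficiency'' argument, whose only non-routine ingredient is matching the hypotheses of strict monotonicity to the definition of dominance. First I would fix the reading of the statement: a weight vector $\mathbf{w}$ \emph{induced by} the strictly monotonic metric $D$ means a $\mathbf{w} \in \mathbb{R}^n_{++}$ that minimizes $\mathbf{x} \mapsto D(\mathbf{A}, [x_i/x_j])$ over all positive $\mathbf{x}$. I would note at the outset that $[x_i/x_j]$ is always a consistent, hence reciprocal, positive matrix, so it genuinely lies in $PCM_n$ and $D(\mathbf{A}, [x_i/x_j])$ is well defined; this is the little bookkeeping point that makes the rest go through.

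Next I would argue by contradiction. Suppose $\mathbf{w}$ is induced by $D$ but is inefficient for (\ref{eq:xMultiObjectiveOptimizationProblem}). By Definition \ref{def:DefinitionEfficient} there is a positive weight vector $\mathbf{w^{\prime}}$ with $|a_{ij} - w^{\prime}_i/w^{\prime}_j| \le |a_{ij} - w_i/w_j|$ for all $1 \le i,j \le n$ and with strict inequality for at least one index pair $(k,\ell)$ (inequalities (\ref{def:DefinitionEfficientProperty1})--(\ref{def:DefinitionEfficientProperty2})). These are precisely the premises in the definition of a strictly monotonic metric, instantiated with $\mathbf{x} = \mathbf{w^{\prime}}$ and $\mathbf{y} = \mathbf{w}$. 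Hence $D(\mathbf{A}, [w^{\prime}_i/w^{\prime}_j]) < D(\mathbf{A}, [w_i/w_j])$, which contradicts the minimality of $\mathbf{w}$. Therefore $\mathbf{w}$ is efficient.

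The argument uses nothing beyond the definitions of efficiency and of strict monotonicity; in particular it invokes neither the graph characterization (Theorem \ref{thm:TheoremDirectedGraphEfficient}) nor the local-efficiency equivalence (Theorem \ref{theorem:EfficientlocalEfficient}). There is essentially no obstacle here: the only points where mild care is needed, which I would flag explicitly, are (i) that the matrix handed to $D$ must be a legitimate element of $PCM_n$, as noted above, and (ii) that ``induced by'' is taken to mean an \emph{exact} minimizer of $D(\mathbf{A}, [\,\cdot\,])$ — for an approximate or near-minimizer the conclusion may fail. One may also remark, in line with Remark \ref{remark:arbitrarynormalization}, that the conclusion is insensitive to the scaling of $\mathbf{w}$.
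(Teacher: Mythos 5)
Your argument is correct: the contradiction via strict monotonicity applied to a dominating vector $\mathbf{w^{\prime}}$ is exactly the standard scalarization argument, and your two bookkeeping points (that $[x_i/x_j]$ is a consistent element of $PCM_n$, and that ``induced by'' means an exact minimizer of $D(\mathbf{A},[\,\cdot\,])$) are the right ones to flag. The paper itself states this theorem without proof, citing Blanquero--Carrizosa--Conde and Fedrizzi, and your proof coincides with the argument used in those sources, so there is nothing further to reconcile.
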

Theorem \ref{thm:StrictlyMonotonicDistance}
implies that the least squares method \cite{Bozoki2008,ChuKalabaSpingarn1979,Fulop2008,Jensen1984} with the objective function
$\sum\limits_{i,j} \left| a_{ij} - \frac{w_i}{w_j} \right|^2$ induces ef{\kern0pt}f{\kern0pt}icient weight vector(s).
Furthermore, power 2 can be replaced by an arbitrary $p \geq 1$, ef{\kern0pt}f{\kern0pt}iciency is kept.\\

Blanquero, Carrizosa and Conde \cite[Corollary 7]{BlanqueroCarrizosaConde2006} proved that the
logarithmic least squares method \cite{CrawfordWilliams1980,CrawfordWilliams1985,deGraan1980,LundySirajGreco2017}
with the objective function $\sum\limits_{i,j} \left( \log a_{ij} - \log \frac{w_i}{w_j} \right)^2$
yields an ef{\kern0pt}f{\kern0pt}icient solution (the row geometric mean).\\

The eigenvector method \cite{Saaty1977} is special, because we have seen in Example \ref{example:0a}
that the principal right eigenvector can be inef{\kern0pt}f{\kern0pt}icient. On the other hand, Fichtner
\cite{Fichtner1984,Fichtner1986} showed that the eigenvector method can be written
as a distance minimizing problem. Note that Fichtner's metric is neither continuous, nor strictly monotonic.\\

\subsection{Results of the paper}
The rest of the paper is organized as follows.
Section \ref{section:2} investigates that the formally dif{\kern0pt}ferent
def{\kern0pt}initions of (weak) ef{\kern0pt}f{\kern0pt}iciency are in fact
equivalent. It is shown that the set of (strongly) dominating weight vectors is convex.
Weak ef{\kern0pt}f{\kern0pt}iciency of the principal eigenvector is proved in Section \ref{section:3}.
A linear program is developed in Section \ref{section:4} in order to test whether a given weight
vector, with respect to a f{\kern0pt}ixed pairwise comparison matrix, is ef{\kern0pt}f{\kern0pt}icient.
If it is inef{\kern0pt}f{\kern0pt}icient, an ef{\kern0pt}f{\kern0pt}icient dominating weight vector is found.
Another linear program is constructed in Section \ref{section:5} for testing weak ef{\kern0pt}f{\kern0pt}iciency.
Again, if the weight vector is found to be strongly inef{\kern0pt}f{\kern0pt}icient, a strongly dominating weight vector is
calculated. Linear programs are implemented in Pairwise Comparison Matrix Calculator, available at pcmc.online.
Section \ref{section:6} concludes and raises some open questions.

\section{Equivalent def{\kern0pt}initions of ef{\kern0pt}f{\kern0pt}iciency and weak ef{\kern0pt}f{\kern0pt}iciency}
\label{section:2} 

In line with the ef{\kern0pt}f{\kern0pt}icient case,
locally and internally weakly ef{\kern0pt}f{\kern0pt}icient points can also be def{\kern0pt}ined in an explicit, problem-specif{\kern0pt}ic form.

Let $E$, $E_L$ and $E_I$
denote the set of the ef{\kern0pt}f{\kern0pt}icient, locally ef{\kern0pt}f{\kern0pt}icient and internally ef{\kern0pt}f{\kern0pt}icient solutions, respectively.
Similarly, let $W\!E$, $W\!E_L$ and $W\!E_I$
denote the set of the
weakly
ef{\kern0pt}f{\kern0pt}icient, locally
weakly
ef{\kern0pt}f{\kern0pt}icient and internally
weakly
ef{\kern0pt}f{\kern0pt}icient solutions, respectively.

According to Def{\kern0pt}inition  (\ref{def:DefinitionWeaklyEfficient}),
$$  W\!E = \{\mathbf{w}>\mathbf{0}\mid\text{there exists no }\mathbf{w^{\prime}}>\mathbf{0} \text{ for which (\ref{def:WeakEfficiency}) holds}\}.
$$

In the same way,
\begin{align}
  W\!E_L  =\{\mathbf{w}>\mathbf{0}\mid  &\text{ there exists a neighbourhood }
U(\mathbf{w}) \text{ such that} \nonumber \\
  &\text{ there exists no }  \mathbf{w^{\prime}} \in U(\mathbf{w}) \text{ for which }
(\ref{def:WeakEfficiency})
\text{ holds}\} \nonumber
\end{align}
and
\begin{align}
W\!E_I = \{\mathbf{w}>\mathbf{0}\mid  &
 \text{ there exists no }\mathbf{w^{\prime}}>\mathbf{0}
\text{ such that}\nonumber \\
&\begin{array}{lll}
a_{ij} \leq \frac{w_i}{w_j} & \Longrightarrow &
a_{ij} \leq \frac{w^{\prime}_i}{w^{\prime}_j} <\frac{w_i}{w_j}
\quad
\text{ for all } 1 \leq i\ne j \leq n,\\
a_{ij} \ge \frac{w_i}{w_j} & \Longrightarrow &
a_{ij} \ge \frac{w^{\prime}_i}{w^{\prime}_j} > \frac{w_i}{w_j}
\quad
\text{ for all } 1 \leq i\ne j \leq n \}.
\end{array}
\nonumber
\end{align}

The above relations imply that if for a given $\mathbf{w}>\mathbf{0}$, there exists an index pair $(k,\ell), k\ne\ell$, such that  $a_{k\ell}= \frac{w_k}{w_\ell}$, then $\mathbf{w}\in W\!E$,  $\mathbf{w}\in W\!E_L$ and  $\mathbf{w}\in W\!E_I$.

It is evident that $E\subseteqq W\!E$,
$E_L\subseteqq W\!E_L$ and $E_I\subseteqq W\!E_I$. We show below that the relations $E=E_L=E_I$ and $W\!E=W\!E_L=W\!E_I$ also hold. This means that the   three def{\kern0pt}initions given, regarding both the stronger and the weaker cases of ef{\kern0pt}f{\kern0pt}iciency, are equivalent.
Example \ref{example:0a} demonstrates that $E\subsetneqq W\!E$.

For a given $\mathbf{w}>\mathbf{0}$, let $D(\mathbf{w})$ denote the set of the points \textit{dominating} the point $\mathbf{w}$, i.e.
\begin{align}
D(\mathbf{w}) = \{\mathbf{x}>\mathbf{0}\mid &
f_{ij}(\mathbf{x})\le f_{ij}(\mathbf{w})\text{ for all } i\ne j \text{ and }\nonumber\\
&f_{k\ell}(\mathbf{x})< f_{k\ell}(\mathbf{w})\text{ for some } k\ne \ell \}.
\nonumber
\end{align}
Similarly, let $SD(\mathbf{w})$ denote the set of the points \textit{strongly dominating} the point $\mathbf{w}$, i.e.
\begin{align}
SD(\mathbf{w}) = \{\mathbf{x}>\mathbf{0}\mid
f_{ij}(\mathbf{x}) < f_{ij}(\mathbf{w})\text{ for all } i\ne j  \}.
\nonumber
\end{align}
It is easy to see that if $SD(\mathbf{w})\ne\emptyset$, then $SD(\mathbf{w}) =\text{int}(D(\mathbf{w}))$ and $\text{cl}(SD(\mathbf{w})) =\text{cl}(D(\mathbf{w}))$, where int and cl denote, the interior and closure, respectively, of the relating set.

\begin{proposition} \label{proposition:DandSDareConvex}
$D(\mathbf{w})$ and $SD(\mathbf{w})$ are convex sets, and if any of them is nonempty, then $\mathbf{w}$ lies in its boundary.
\end{proposition}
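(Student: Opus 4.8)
The plan is to rest everything on one elementary fact, which I would isolate as an auxiliary lemma: if $q,q'>0$ and $\lambda\in[0,1]$, the \emph{weighted mediant} $\frac{\lambda p+(1-\lambda)p'}{\lambda q+(1-\lambda)q'}$ lies in the closed interval with endpoints $p/q$ and $p'/q'$, and strictly between them whenever $\lambda\in(0,1)$ and $p/q\neq p'/q'$; the proof is a one-line cross-multiplication. Equivalently — and this is how it gets used — for each pair $i\neq j$ and each level $c\geq 0$ the sets $\{\mathbf{x}>\mathbf{0}\mid f_{ij}(\mathbf{x})\leq c\}$ and $\{\mathbf{x}>\mathbf{0}\mid f_{ij}(\mathbf{x})<c\}$ are convex, since (using $x_j>0$) the condition $f_{ij}(\mathbf{x})\leq c$ unfolds into the two linear inequalities $(a_{ij}-c)x_j\leq x_i\leq(a_{ij}+c)x_j$.

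Convexity of $SD(\mathbf{w})$ is then immediate: it equals $\bigcap_{i\neq j}\{\mathbf{x}>\mathbf{0}\mid f_{ij}(\mathbf{x})<f_{ij}(\mathbf{w})\}$, an intersection of convex sets. For $D(\mathbf{w})$ one must be more careful, because $D(\mathbf{w})$ is the union over $(k,\ell)$ of the convex sets obtained by also fixing the strictly improved coordinate, and an arbitrary union of convex sets need not be convex. I would argue directly: given $\mathbf{x},\mathbf{y}\in D(\mathbf{w})$, $\lambda\in(0,1)$, set $\mathbf{z}=\lambda\mathbf{x}+(1-\lambda)\mathbf{y}$. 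For every $i\neq j$, $z_i/z_j$ is the weighted mediant of $x_i/x_j$ and $y_i/y_j$, both of which lie in $I_{ij}:=[a_{ij}-f_{ij}(\mathbf{w}),a_{ij}+f_{ij}(\mathbf{w})]$, so $z_i/z_j\in I_{ij}$ and $f_{ij}(\mathbf{z})\leq f_{ij}(\mathbf{w})$. To produce a strictly improved coordinate, pick $(k,\ell)$ with $f_{k\ell}(\mathbf{x})<f_{k\ell}(\mathbf{w})$; then $f_{k\ell}(\mathbf{w})>0$, $x_k/x_\ell$ lies in the interior of $I_{k\ell}$, and $y_k/y_\ell\in I_{k\ell}$. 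If $x_k/x_\ell=y_k/y_\ell$ then $z_k/z_\ell=x_k/x_\ell$; otherwise the mediant lemma places $z_k/z_\ell$ strictly between them, hence still in the interior of $I_{k\ell}$. Either way $f_{k\ell}(\mathbf{z})<f_{k\ell}(\mathbf{w})$, so $\mathbf{z}\in D(\mathbf{w})$. (An alternative is to note that on $C:=\bigcap_{i\neq j}\{f_{ij}\leq f_{ij}(\mathbf{w})\}$ each equation $f_{ij}(\mathbf{x})=f_{ij}(\mathbf{w})$ cuts out a union of faces, so the set $F$ where all those equalities hold is an intersection of unions of faces, hence an extreme subset of the convex set $C$, and $D(\mathbf{w})=C\setminus F$ is convex by the standard face argument.)

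For the boundary statement, note first that $\mathbf{w}\notin D(\mathbf{w})$ and $\mathbf{w}\notin SD(\mathbf{w})$ — $\mathbf{w}$ cannot strictly decrease any $f_{ij}$ at itself — so $\mathbf{w}$ lies in neither interior, and it only remains to realize $\mathbf{w}$ as a limit of points of the set. If $D(\mathbf{w})\neq\emptyset$, fix $\mathbf{x}\in D(\mathbf{w})$ and consider $\mathbf{z}_\lambda=(1-\lambda)\mathbf{w}+\lambda\mathbf{x}$: the same computation gives $z_{\lambda,i}/z_{\lambda,j}\in I_{ij}$ for all $i\neq j$, while for the strictly improved pair $(k,\ell)$ the value $w_k/w_\ell$ is an \emph{endpoint} of $I_{k\ell}$ and $x_k/x_\ell$ is interior, so for $\lambda\in(0,1)$ the mediant $z_{\lambda,k}/z_{\lambda,\ell}$ is interior; hence $\mathbf{z}_\lambda\in D(\mathbf{w})$ for all $\lambda\in(0,1]$ and $\mathbf{z}_\lambda\to\mathbf{w}$ as $\lambda\to0^{+}$, giving $\mathbf{w}\in\text{cl}(D(\mathbf{w}))\setminus\text{int}(D(\mathbf{w}))=\partial D(\mathbf{w})$. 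The case of $SD(\mathbf{w})$ is identical, with \emph{every} pair $i\neq j$ now in the role of $(k,\ell)$; this is exactly where $SD(\mathbf{w})\neq\emptyset$ enters, since it forces $f_{ij}(\mathbf{w})>0$ for all $i\neq j$.

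The one place that needs genuine care — and the only real obstacle — is the convexity of $D(\mathbf{w})$: because it is a union of convex pieces, its convexity must be extracted from the weighted-mediant estimate (respectively, from the extreme-subset structure of $F$); once that is in hand, everything else is routine bookkeeping with the intervals $I_{ij}$.
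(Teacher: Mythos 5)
Your proof is correct and follows essentially the same route as the paper: your weighted-mediant lemma is just the cross-multiplied form of the linear inequalities the paper uses to exhibit $SD(\mathbf{w})$ as an intersection of open halfspaces, and your convexity argument for $D(\mathbf{w})$, which inherits the strictly improved pair $(k,\ell)$ from one endpoint of the segment, is exactly the paper's argument. Your explicit segment construction $\mathbf{z}_\lambda=(1-\lambda)\mathbf{w}+\lambda\mathbf{x}$ for the boundary claim merely spells out what the paper leaves terse, so there is no substantive difference in approach.
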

\begin{proof}
We start with the proof of the simpler case of  $SD(\mathbf{w})$. Clearly,
\begin{align}
\mathbf{x}\in SD(\mathbf{w})
 \Longleftrightarrow
 &\left| \frac{x_i}{x_j} - a_{ij}\right| < f_{ij}(\mathbf{w})\ \text{ for all } i\ne j
 \Longleftrightarrow  \nonumber  \\
 &\frac{x_i}{x_j} - a_{ij} < f_{ij}(\mathbf{w}),\ -\frac{x_i}{x_j} + a_{ij} < f_{ij}(\mathbf{w})\ \text{ for all } i\ne j
 \Longleftrightarrow  \nonumber \\
 &x_i + (- a_{ij}-f_{ij}(\mathbf{w}))x_j< 0,\
 -x_i + (a_{ij}-f_{ij}(\mathbf{w}))x_j< 0,
 \ \text{ for all } i\ne j.
 \nonumber
\end{align}
The set of points fulf{\kern0pt}illing the last system of strict inequalities is an intersection of f{\kern0pt}initely many open halfspaces, it is thus an open convex set. At the same time, with $\mathbf{x} = \mathbf{w}$, the linear inequalities above hold as equalities, consequently, $\mathbf{w}$ lies in the boundary of $SD(\mathbf{w})$, of course, if it is nonempty.

By applying similar rearranging steps, we also get that
\begin{align}
\mathbf{x}\in D(\mathbf{w})
 \Longleftrightarrow
 &x_i + (- a_{ij}-f_{ij}(\mathbf{w}))x_j\le 0,\
 -x_i + (a_{ij}-f_{ij}(\mathbf{w}))x_j\le 0,
 \ \text{ for all } i\ne j, \text{ and}
 \label{eq:Dw1} 
  \\
  &x_k + (- a_{k\ell}-f_{k\ell}(\mathbf{w}))x_\ell< 0,\
 -x_k + (a_{k\ell}-f_{k\ell}(\mathbf{w}))x_\ell< 0,
 \text{ for some } k\ne \ell.
 \label{eq:Dw2} 
\end{align}

We show that $D(\mathbf{w})$ is a convex set.
Let $\mathbf{y}\ne\mathbf{z}\in D(\mathbf{w})$, $0<\lambda<1$ and
${\mathbf{\hat x}}=\lambda\mathbf{y} +
(1-\lambda )\mathbf{z}$.
The linear inequalities of (\ref{eq:Dw1}) hold at the points ${\mathbf{\hat x}}, \mathbf{y}$ and $\mathbf{z}$.

Let $(\hat{k},\hat{\ell)}, \hat{k}\ne\hat{\ell}$ denote the index pair for which (\ref{eq:Dw2}) also holds at the point $\mathbf{x}=\mathbf{y}$.
Then, with $\mathbf{x}=\mathbf{\hat x}$, (\ref{eq:Dw2}) also holds for the index pair $(\hat{k},\hat{\ell)}$.
This implies $\mathbf{\hat x}\in D(\mathbf{w})$ and the convexity of $D(\mathbf{w})$.

The point $\mathbf{x}=\mathbf{w}$ fulf{\kern0pt}ils (\ref{eq:Dw1}) as equalities. Thus, $\mathbf{w}\not\in D(\mathbf{w})$ but $\mathbf{w}$ is boundary point of $D(\mathbf{w})$ if it is nonempty.
\end{proof}

\medskip
\begin{proposition} \label{proposition:EandWEandEquivalentForms}
$E=E_L=E_I$ and $W\!E=W\!E_L=W\!E_I$.
\end{proposition}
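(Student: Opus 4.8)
The plan is to handle the efficient chain and the weakly efficient chain separately. The equalities $E=E_L=E_I$ are already available: they are precisely the content of Corollary~\ref{corollary:EfficiencyDefinitionsAreEquivalent}, obtained by combining Theorem~\ref{theorem:EfficientlocalEfficient} ($E=E_L$) with Proposition~\ref{proposition:EfficientInternallyEfficient} ($E=E_I$), so I would simply invoke it. It remains to prove $W\!E=W\!E_L=W\!E_I$, which I would do through the four inclusions $W\!E\subseteqq W\!E_L$, $W\!E_L\subseteqq W\!E$, $W\!E\subseteqq W\!E_I$ and $W\!E_I\subseteqq W\!E$; the first two give $W\!E=W\!E_L$ and the last two give $W\!E=W\!E_I$, whence all three coincide.

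Two of the inclusions are immediate. $W\!E\subseteqq W\!E_L$ holds because if no $\mathbf{w^{\prime}}>\mathbf{0}$ whatsoever satisfies (\ref{def:WeakEfficiency}), then in particular none does inside a prescribed neighbourhood. And $W\!E\subseteqq W\!E_I$ follows by contraposition: if some $\mathbf{w^{\prime}}$ realises internal strong domination of $\mathbf{w}$, then from the two defining implications one reads off $|a_{ij}-w^{\prime}_i/w^{\prime}_j|<|a_{ij}-w_i/w_j|$ for every $i\ne j$, i.e.\ $\mathbf{w^{\prime}}$ realises (\ref{def:WeakEfficiency}), so $\mathbf{w}\notin W\!E$. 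Throughout I would also use the already recorded fact that if $a_{ij}=w_i/w_j$ for some $i\ne j$ then $\mathbf{w}$ lies in all the relevant sets, so these boundary configurations never need separate attention.

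The content is in the reverse inclusions, and Proposition~\ref{proposition:DandSDareConvex} supplies it. Suppose $\mathbf{w}\notin W\!E$, i.e.\ $SD(\mathbf{w})\ne\emptyset$. By Proposition~\ref{proposition:DandSDareConvex}, $\mathbf{w}$ is a boundary point of the open set $SD(\mathbf{w})\subseteqq\mathbb{R}^n_{++}$, hence every neighbourhood of $\mathbf{w}$ contains a vector strongly dominating $\mathbf{w}$; therefore $\mathbf{w}\notin W\!E_L$, which gives $W\!E_L\subseteqq W\!E$. For $W\!E_I\subseteqq W\!E$ I would again take the contrapositive. If $\mathbf{w}\notin W\!E$ then $a_{ij}\ne w_i/w_j$ for every $i\ne j$ (otherwise $\mathbf{w}\in W\!E$), so each difference $w_i/w_j-a_{ij}$ is nonzero; using the boundary observation just made together with the continuity of $\mathbf{x}\mapsto x_i/x_j$, I can choose $\mathbf{w^{\prime}}\in SD(\mathbf{w})$ so close to $\mathbf{w}$ that $w^{\prime}_i/w^{\prime}_j-a_{ij}$ has the same sign as $w_i/w_j-a_{ij}$ for every $i\ne j$, while the strict inequalities $|a_{ij}-w^{\prime}_i/w^{\prime}_j|<|a_{ij}-w_i/w_j|$ come for free from $\mathbf{w^{\prime}}\in SD(\mathbf{w})$. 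A direct check then shows that such a $\mathbf{w^{\prime}}$ satisfies the internal strong-domination inequalities, so $\mathbf{w}\notin W\!E_I$. Combining the four inclusions yields $W\!E=W\!E_L=W\!E_I$.

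I do not anticipate a genuine obstacle: beyond Proposition~\ref{proposition:DandSDareConvex} the argument is bookkeeping of inclusions. The only place requiring a little care is the construction of the internally dominating $\mathbf{w^{\prime}}$ in the step $W\!E_I\subseteqq W\!E$ — one must first note that $\mathbf{w}\notin W\!E$ excludes the case $a_{ij}=w_i/w_j$, so that the signs of the ratio errors are well defined and stay fixed when $\mathbf{w^{\prime}}$ is taken sufficiently close to $\mathbf{w}$; this is an elementary continuity argument, and nothing deeper is involved.
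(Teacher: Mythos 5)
Your proof is correct, but it is organised differently from the paper's. The paper does not re-use Corollary~\ref{corollary:EfficiencyDefinitionsAreEquivalent} at this point: it re-proves $E=E_L=E_I$ from scratch by writing $E$, $E_L$, $E_I$ as emptiness conditions on $D(\mathbf{w})$ (intersected with $U(\mathbf{w})$, resp.\ the convex set $V_I(\mathbf{w})$), and then, given $\mathbf{\hat x}\in D(\mathbf{w})$, uses convexity of $D(\mathbf{w})$ together with the fact that $\mathbf{w}$ lies in its boundary (Proposition~\ref{proposition:DandSDareConvex}) to show that the half-open segment $[\mathbf{\hat x},\mathbf{w})$ stays in $D(\mathbf{w})$, with points near $\mathbf{w}$ also lying in $U(\mathbf{w})$ or $V_I(\mathbf{w})$; the weak chain is then declared to follow ``in the same way'' with $SD(\mathbf{w})$ in place of $D(\mathbf{w})$. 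You instead import $E=E_L=E_I$ wholesale from Corollary~\ref{corollary:EfficiencyDefinitionsAreEquivalent} (hence ultimately from the cited Theorem~\ref{theorem:EfficientlocalEfficient} of Blanquero--Carrizosa--Conde) and only argue the weak chain by hand, replacing the segment construction by ``every neighbourhood of a boundary point of the open set $SD(\mathbf{w})$ meets $SD(\mathbf{w})$'' plus a sign-preservation-by-continuity step (having correctly excluded the case $a_{ij}=w_i/w_j$) to get an internally strongly dominating point. Both arguments are sound; the paper's route buys a self-contained, uniform treatment of the two chains that does not lean on the external theorem, while yours is shorter and makes explicit the $W\!E_I$ step that the paper leaves to the reader. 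One small remark: your ball-plus-continuity construction and the paper's segment argument do the same job, and the segment version has the minor advantage of handling the strong chain's equality cases $a_{ij}=w_i/w_j$ automatically (the condition $x_i-a_{ij}x_j=0$ is linear, hence preserved along $[\mathbf{\hat x},\mathbf{w}]$), which is why the paper can treat $E_I$ and $W\!E_I$ in one breath.
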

\begin{proof}
Obviously,
$$
\begin{array}{lllll}
E\ \ =\{\mathbf{w}>\mathbf{0}\mid D(\mathbf{w})=\emptyset\},\\
E_L=\{\mathbf{w}>\mathbf{0}\mid D(\mathbf{w})\cap U(\mathbf{w})=\emptyset\},
\text{ where } U(\mathbf{w})
\text{ is a suitably small neighborhood}\\ ~~~~~~~~\text{ around } \mathbf{w},\text{ and }\\
E_I=\{\mathbf{w}>\mathbf{0}\mid D(\mathbf{w})\cap V_I(\mathbf{w})=\emptyset\},
\text{ where } \\
~~~~~~~~V_I(\mathbf{w})=\{\mathbf{x}>\mathbf{0}\mid a_{ij}\le \frac{x_i}{x_j}\le  \frac{w_i}{w_j}\text{ for }  a_{ij}\le   \frac{w_i}{w_j}, \forall i\ne j,
 a_{ij}\ge \frac{x_i}{x_j}\ge  \frac{w_i}{w_j}\text{ for }  a_{ij}\ge   \frac{w_i}{w_j}, \forall i\ne j \}\\
~~~~~~~~\text{ is a convex set containing } \mathbf{w}.
\end{array}
$$

If $\mathbf{w}\in E$, then $D(\mathbf{w})=\emptyset$, thus $\mathbf{w}\in E_L$ and $\mathbf{w}\in E_I$, therefore, $E\subseteq E_L$ and $E\subseteq E_I$.

We show that $E_L\subseteq E$ holds, as well. Let $\mathbf{w}\in E_L$ and assume that $\mathbf{w}\not\in E$, i.e.\ $D(\mathbf{w})\ne\emptyset$. Let $\mathbf{\hat{x}}\in D(\mathbf{w})$.  Since
$\mathbf{w}$ is a boundary point of the convex set $D(\mathbf{w})$, every point of the half-open line segment $[\mathbf{\hat{x}}, \mathbf{w})$ is in $D(\mathbf{w})$. However, the points of $[\mathbf{\hat{x}}, \mathbf{w})$ that are close enough to $\mathbf{w}$ are also in $U(\mathbf{w})$. This contradicts
$\mathbf{w}\in E_L$ since $D(\mathbf{w})\cap U(\mathbf{w})\ne\emptyset$. Consequently, $\mathbf{w}\in E$, thus,
$E_L\subseteq E$, and then $E_L = E$ .

The proof of $E_I\subseteq E$ is similar.
Let $\mathbf{w}\in E_I$ and assume that $\mathbf{w}\not\in E$. Let $\mathbf{\hat{x}}\in D(\mathbf{w})$. Now, if $a_{ij}=\frac{w_i}{w_j}$, then also
$a_{ij}=\frac{x_i}{x_j}$ for every $\mathbf{x}\in [\mathbf{\hat{x}},\mathbf{w}]$. If  $a_{ij}<\frac{w_i}{w_j}$, then
$a_{ij}<\frac{x_i}{x_j}<\frac{w_i}{w_j}$ for the points $\mathbf{x}\in [\mathbf{\hat{x}},\mathbf{w}]$ being close enough  to $\mathbf{w}$. The same holds in case of  $a_{ij}>\frac{w_i}{w_j}$ with opposite sign.  These imply that $[\mathbf{\hat{x}},\mathbf{w})\cap D(\mathbf{w})\cap V_I(\mathbf{w})\ne\emptyset$, leading again to a contradiction. Consequently,
$E_I\subseteq E$, so $E_I = E$ .

The proof of the relations $W\!E=W\!E_L=W\!E_I$ can be carried out in the same way, we have simply use the set $SD(\mathbf{w})$ instead of $D(\mathbf{w})$. The remainder part of the proof is left to the reader.
\end{proof}

\section{The principal right eigenvector is weakly ef{\kern0pt}f{\kern0pt}icient}
\label{section:3} 

Blanquero, Carrizosa and Conde \cite[p.~279]{BlanqueroCarrizosaConde2006} stated (without proof) that
weak ef{\kern0pt}f{\kern0pt}iciency is equivalent to that the directed graph, according to Def{\kern0pt}inition \ref{def:DirectedGraphEfficient}, includes at least one cycle. Here we rephrase
the proposition and give a proof.

\begin{lemma} \label{lemma:acyclic}   
Let $\mathbf{A}$ be an arbitrary pairwise comparison matrix of size $n \times n$
and $\mathbf{w}$ be an arbitrary positive weight vector.
Weight vector $\mathbf{w}$ is strongly inef{\kern0pt}f{\kern0pt}icient
 for (\ref{eq:xMultiObjectiveOptimizationProblem})
if and only if
its digraph is isomorphic to the acyclic tournament on $n$ vertices (including
$arc(i,j)$ if and only if $i < j$).
\end{lemma}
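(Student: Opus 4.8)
\medskip
\noindent\textbf{Proof plan.}
The plan is to argue entirely through the digraph $G=(V,\overrightarrow{E})_{\mathbf{A},\mathbf{w}}$ of Definition~\ref{def:DirectedGraphEfficient}. First I would record the elementary dichotomy: by reciprocity, $w_i/w_j\ge a_{ij}$ is equivalent to $w_j/w_i\le a_{ji}$, so for every pair $i\ne j$ exactly one of the arcs $i\to j$, $j\to i$ belongs to $\overrightarrow{E}$ when $w_i/w_j\ne a_{ij}$, while both belong to $\overrightarrow{E}$ when $w_i/w_j=a_{ij}$. Hence $G$ is always semicomplete (between any two vertices there is at least one arc), and it is isomorphic to the acyclic tournament on $n$ vertices exactly when (i)~$w_i/w_j\ne a_{ij}$ for every $i\ne j$ and (ii)~$G$ has no directed cycle. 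Since both strong (in)efficiency and the isomorphism type of $G$ are unchanged under a simultaneous permutation of the rows and columns of $\mathbf{A}$ and of the entries of $\mathbf{w}$, after such a relabelling (i)+(ii) simply reads $w_i/w_j>a_{ij}\iff i<j$.

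For necessity I would take a strong dominator $\mathbf{w^{\prime}}$ of $\mathbf{w}$ and note first that $|a_{ij}-w_i/w_j|>0$ for all $i\ne j$, so no equality $w_i/w_j=a_{ij}$ occurs and $G$ is a tournament; and that along any arc $i\to j$ one has $w_i/w_j>a_{ij}$, so $|a_{ij}-w^{\prime}_i/w^{\prime}_j|<w_i/w_j-a_{ij}$ forces $w^{\prime}_i/w^{\prime}_j<w_i/w_j$. Then, if $G$ had a directed cycle $v_1\to v_2\to\cdots\to v_m\to v_1$, multiplying these strict inequalities around the cycle (all factors positive) would give $1=\prod_{t}w^{\prime}_{v_t}/w^{\prime}_{v_{t+1}}<\prod_{t}w_{v_t}/w_{v_{t+1}}=1$, a contradiction; hence $G$ is acyclic, that is, isomorphic to the acyclic tournament on $n$ vertices.

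For sufficiency I would, after the relabelling above (so that $w_i/w_j>a_{ij}$ precisely when $i<j$), build $\mathbf{w^{\prime}}$ by multiplying $\mathbf{w}$ by a small \emph{consistent} factor that pushes every ratio towards $a_{ij}$ without overshooting. Concretely, set $\rho=\bigl(\min_{i<j}(w_i/w_j)/a_{ij}\bigr)^{1/(n-1)}$, which exceeds $1$ because $w_i/w_j>a_{ij}$ for all $i<j$, fix any $t\in(1,\rho)$, and put $w^{\prime}_i=w_i\,t^{\,i}$. For $i<j$ one then checks $a_{ij}<w^{\prime}_i/w^{\prime}_j=(w_i/w_j)\,t^{\,i-j}<w_i/w_j$: the upper bound since $t>1$ and $i-j<0$, the lower bound since $t^{\,j-i}\le t^{\,n-1}<\rho^{\,n-1}\le(w_i/w_j)/a_{ij}$. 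By reciprocity the reversed chain $w_i/w_j<w^{\prime}_i/w^{\prime}_j<a_{ij}$ holds for $i>j$. In every non-diagonal position this gives $|a_{ij}-w^{\prime}_i/w^{\prime}_j|<|a_{ij}-w_i/w_j|$, so the positive vector $\mathbf{w^{\prime}}$ strongly dominates $\mathbf{w}$ and $\mathbf{w}$ is strongly inefficient. This is the general analogue of the consistent-case construction in Example~\ref{example:1}.

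The necessity direction is essentially the one-line telescoping-product observation and should be short. The step I expect to cost the most care is sufficiency: a single perturbation must beat $\mathbf{w}$ in all $n(n-1)$ positions at once, which is exactly what forces the perturbation factor to be taken uniformly small (the one bound $t<\rho$ serves every pair) and the exponents to be chosen so that each ratio moves strictly toward $a_{ij}$. I would also make the permutation-invariance explicit at the outset, so that the ``without loss of generality'' relabelling is properly justified.
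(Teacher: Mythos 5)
Your proof is correct, and while the necessity half coincides with the paper's argument, the sufficiency half takes a genuinely different route. For necessity the paper also multiplies the strict inequalities $w'_i/w'_j<w_i/w_j$ around a directed cycle to reach $1<1$; it phrases this for a $3$-cycle only, whereas you run it around an arbitrary cycle and, usefully, you first rule out bidirected arcs by observing that a strong dominator forces $\lvert a_{ij}-w_i/w_j\rvert>0$ for all $i\ne j$ — a case the paper handles only implicitly (an equality $a_{k\ell}=w_k/w_\ell$ makes $\mathbf{w}$ weakly efficient). For sufficiency the paper does not use your single-parameter geometric perturbation: after the same relabelling it defines position-dependent multipliers $p_j=\max_{i<j}\bigl\{a_{ij}/(w_i/w_j)\bigr\}<1$ and sets $w'_k=w_k\prod_{j=k+1}^{n}p_j$, which yields $a_{ij}\le w'_i/w'_j<w_i/w_j$ with equality attained in at least one position per row (e.g.\ $w'_1/w'_2=a_{12}$), so the dominating vector actually touches the matrix; your choice $w'_i=w_i\,t^{\,i}$ with $1<t<\rho=\bigl(\min_{i<j}(w_i/w_j)/a_{ij}\bigr)^{1/(n-1)}$ keeps all inequalities strict, $a_{ij}<w'_i/w'_j<w_i/w_j$, which is simpler to verify (one uniform bound serves every pair) and is exactly what strong domination requires, at the cost of not producing a vector with the ``exact fit'' feature of the paper's construction. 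Both constructions are valid, and your explicit justification of the permutation-invariance behind the ``without loss of generality'' step is a small but welcome addition.
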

\begin{proof}
Suf{\kern0pt}f{\kern0pt}iciency. Assume without loss of generality that the rows and columns of pairwise comparison matrix $\mathbf{A}$
are permuted such that digraph $G$ includes $arc(i,j)$ if and only if $i < j.$
Then
\begin{align}
\frac{w_i}{w_j} > a_{ij} \qquad \text{ for all } 1 \leq i < j \leq n,
\label{def:AcyclicInequality}    
\end{align}
and, equivalently, $\frac{w_i}{w_j} < a_{ij} $ for all
$ 1 \leq j < i \leq n.$ We shall f{\kern0pt}ind a weight vector
$\mathbf{w}^{\prime}$ such that
(\ref{def:DefinitionEfficientProperty1}), moreover,
$\frac{w_i}{w_j} > \frac{w^{\prime}_i}{w^{\prime}_j} \geq a_{ij} $
hold for all $ 1 \leq i < j \leq n.$

Let
\begin{equation}
p_j := \underset{i=1,2,\ldots,j-1}{\max} \left\{  \frac{a_{ij}}{\frac{w_i}{w_j}} \right\},
\, j=2,3,\ldots,n. \label{multipliers}    
\end{equation}
It follows from (\ref{def:AcyclicInequality}) that
$ p_j < 1$  for all $ 2 \leq j \leq n.$
Let ${w}^{\prime}_k := w_k \cdot \prod\limits_{j=k+1}^{n} p_j$
for all $ 1 \leq k \leq n-1,$ and ${w}^{\prime}_n := w_n.$
It follows from the construction that
\[
\frac{{w}^{\prime}_k}{{w}^{\prime}_{\ell}} =
\frac{w_k}{w_{\ell}} \frac{\prod\limits_{j=k+1}^{n} p_j}{\prod\limits_{j=\ell+1}^{n} p_j} =
\frac{w_k}{w_{\ell}} \prod\limits_{j=k+1}^{\ell} p_j < \frac{w_k}{w_{\ell}}.
\]
On the other hand, (\ref{multipliers}) ensures that $\frac{{w}^{\prime}_k}{{w}^{\prime}_{\ell}} \geq a_{k\ell}.$
Furthermore, for every $ 1 \leq k \leq n-1$  there exists a (not necessarily unique) $\ell > k$ such that
$\frac{{w}^{\prime}_k}{{w}^{\prime}_{\ell}} = a_{k\ell}.$
Especially $\frac{{w}^{\prime}_1}{{w}^{\prime}_{2}} = a_{12}.$
\\

For necessity let us suppose that digraph $G$ includes a directed 3-cycle $(i,j,k)$:
$
\frac{w_i}{w_j} > a_{ij},
\frac{w_j}{w_k} > a_{jk},
\frac{w_k}{w_i} > a_{ki}.
$
Assume for contradiction that weight vector $\mathbf{w}$ is strongly inef{\kern0pt}f{\kern0pt}icient,
that is, there exists another weight vector $\mathbf{w}^{\prime}$ such that
(\ref{def:DefinitionEfficientProperty1}) holds.
Then
\newpage
\begin{align}
\frac{w_i}{w_j}  &>  \frac{w_i^{\prime}}{w_j^{\prime}},  \label{wiwjaij}  \\  
\frac{w_j}{w_k}  &>  \frac{w_j^{\prime}}{w_k^{\prime}},  \label{wjwkajk}  \\  
\frac{w_k}{w_i}  &>  \frac{w_k^{\prime}}{w_i^{\prime}},  \label{wkwiaki}    
\end{align}
otherwise none of
\[
\left| \frac{w_i^{\prime}}{w_j^{\prime}} - a_{ij}  \right|  <  \left| \frac{w_i}{w_j} - a_{ij}  \right|,  \quad
\left| \frac{w_j^{\prime}}{w_k^{\prime}} - a_{jk}  \right|  <  \left| \frac{w_j}{w_k} - a_{jk}  \right|,  \quad
\left| \frac{w_k^{\prime}}{w_i^{\prime}} - a_{ki}  \right|  <  \left| \frac{w_k}{w_i} - a_{ki}  \right|
\]
could hold. Multiply inequalities (\ref{wiwjaij})-(\ref{wkwiaki})
to get the contradiction $1>1.$
\end{proof}

\begin{corollary} \label{corollary:outdegrees}
Weight vector is strongly inef{\kern0pt}f{\kern0pt}icient
 for (\ref{eq:xMultiObjectiveOptimizationProblem})
if and only if the set of outdegrees
in the associated directed graph is $\{0,1,2,\ldots,n-1\}.$
\end{corollary}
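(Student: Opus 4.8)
The plan is to read the corollary off Lemma~\ref{lemma:acyclic} combined with the classical characterization of transitive tournaments by their outdegree (score) sequence. Throughout write $G=(V,\overrightarrow{E})_{\mathbf{A},\mathbf{w}}$ for the digraph of Definition~\ref{def:DirectedGraphEfficient}.

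First I would record two elementary facts about $G$. For every pair $i\ne j$ at least one of the arcs $i\to j$, $j\to i$ belongs to $\overrightarrow{E}$, since either $w_i/w_j\ge a_{ij}$, or else $w_j/w_i=1/(w_i/w_j)>1/a_{ij}=a_{ji}$; and both arcs are present exactly when $w_i/w_j=a_{ij}$ (a bidirected arc). Hence the number of arcs of $G$ equals $n(n-1)/2$ plus the number of bidirected pairs, and this number is also the sum of the outdegrees of the vertices of $G$.

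For necessity, assume $\mathbf{w}$ is strongly inefficient. By Lemma~\ref{lemma:acyclic}, $G$ is isomorphic to the acyclic tournament on $n$ vertices; after the corresponding relabelling the out-neighbours of vertex $i$ are exactly $\{i+1,\dots,n\}$, so its outdegree is $n-i$, and the set of outdegrees is $\{0,1,\dots,n-1\}$. For sufficiency, assume the set of outdegrees of $G$ is $\{0,1,\dots,n-1\}$. Since $G$ has $n$ vertices and the $n$ outdegrees realize all $n$ of these values, each value occurs exactly once, so the outdegrees sum to $n(n-1)/2$. By the second fact above this forces $G$ to have exactly $n(n-1)/2$ arcs, hence no bidirected pair, i.e.\ $G$ is a tournament. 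A tournament whose outdegree set is $\{0,1,\dots,n-1\}$ is transitive: the unique vertex of outdegree $n-1$ dominates all others, so deleting it leaves a tournament on $n-1$ vertices whose outdegrees are unchanged and therefore equal to $\{0,1,\dots,n-2\}$, and induction on $n$ finishes it. Thus $G$ is isomorphic to the acyclic tournament on $n$ vertices, and Lemma~\ref{lemma:acyclic} yields that $\mathbf{w}$ is strongly inefficient.

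The one point I would be careful about is that, a priori, $G$ need not be a tournament — a bidirected arc appears whenever $w_i/w_j=a_{ij}$ — so in the sufficiency direction the bidirected possibility has to be excluded before the tournament fact can be applied; this is exactly what the outdegree-sum bookkeeping does. Beyond that, the argument is just the standard score-sequence characterization of transitive tournaments, so I expect no real obstacle.
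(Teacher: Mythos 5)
Your proof is correct and follows exactly the route the paper intends: the corollary is stated as an immediate consequence of Lemma~\ref{lemma:acyclic}, and your argument simply makes that deduction explicit, using the score-sequence characterization of the transitive tournament for sufficiency. Your extra care in ruling out bidirected arcs via the outdegree-sum count is a genuine (and correctly handled) detail that the paper leaves implicit, but it does not constitute a different approach.
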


\begin{theorem}
The principal eigenvector of a pairwise comparison matrix is weakly ef{\kern0pt}f{\kern0pt}icient
 for (\ref{eq:xMultiObjectiveOptimizationProblem}).
\end{theorem}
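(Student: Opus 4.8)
The plan is to reduce the statement to the combinatorial criterion just obtained. By Definition~\ref{def:DefinitionWeaklyEfficient} the principal eigenvector $\mathbf{w}$ is weakly efficient exactly when it is not strongly inefficient, and by Lemma~\ref{lemma:acyclic} (equivalently Corollary~\ref{corollary:outdegrees}) strong inefficiency is equivalent to the digraph $G=(V,\overrightarrow{E})_{\mathbf{A},\mathbf{w}}$ being isomorphic to the acyclic tournament on $n$ vertices. So it suffices to show that, when $\mathbf{w}$ is the Perron vector of $\mathbf{A}$, the digraph $G$ is never acyclic; equivalently, no relabeling of the vertices makes $w_i/w_j>a_{ij}$ hold for all $i<j$.

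I would argue by contradiction. Suppose $G$ is isomorphic to the acyclic tournament. Permuting the rows and columns of $\mathbf{A}$ simultaneously (which permutes the coordinates of $\mathbf{w}$ in the same way and preserves the relation $\mathbf{A}\mathbf{w}=\lambda_{\max}\mathbf{w}$), we may assume $\overrightarrow{E}$ consists precisely of the arcs $i\to j$ with $i<j$. Note that "isomorphic to the acyclic tournament" excludes bidirected arcs, so in fact $w_i/w_j>a_{ij}$ \emph{strictly} for all $i<j$; this strictness is what will be used. In particular vertex $1$ has outdegree $n-1$, i.e. $a_{1j}w_j<w_1$ for every $j=2,\dots,n$. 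Since $\mathbf{A}$ is positive, Perron--Frobenius gives $\mathbf{A}\mathbf{w}=\lambda_{\max}\mathbf{w}$ with $\lambda_{\max}=\rho(\mathbf{A})>0$, and reading off the first row (using $a_{11}=1$),
\[
\lambda_{\max}\,w_1 \;=\; w_1 + \sum_{j=2}^{n} a_{1j}w_j \;<\; w_1 + (n-1)w_1 \;=\; n\,w_1,
\]
so that $\lambda_{\max}<n$.

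This contradicts the classical Saaty inequality $\lambda_{\max}\ge n$, which holds for every $n\times n$ positive reciprocal matrix (with equality iff $\mathbf{A}$ is consistent). Hence $G$ cannot be the acyclic tournament, so $\mathbf{w}$ is not strongly inefficient, i.e. it is weakly efficient. The only non-routine ingredient here is the observation that the source vertex of a hypothetical acyclic tournament forces $\lambda_{\max}<n$, colliding with $\lambda_{\max}\ge n$; everything else is bookkeeping. A mild point to watch is the strictness of $w_i/w_j>a_{ij}$ for $i<j$, which comes exactly from ruling out two-cycles in the isomorphism and which makes the displayed inequality strict. One could alternatively run the symmetric argument on the sink vertex with the left Perron vector, but the right-eigenvector/source version above is the shortest.
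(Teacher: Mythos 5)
Your proof is correct and follows essentially the same route as the paper: assume strong inefficiency, invoke Lemma~\ref{lemma:acyclic} to relabel so that $w_i/w_j>a_{ij}$ for all $i<j$, and contradict $\lambda_{\max}\ge n$ via a row of $\mathbf{A}\mathbf{w}=\lambda_{\max}\mathbf{w}$. Your only deviation is cosmetic --- you specialize to the source vertex (first row), which is in fact a slightly more careful reading of the same estimate the paper uses.
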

\begin{proof}
The principal right eigenvector $\mathbf{w}$ satisf{\kern0pt}ies the equation
\begin{equation}
\mathbf{A} \mathbf{w} = \lambda_{\max} \mathbf{w}. \label{Avlambdav}    
\end{equation}
Assume for contradiction that weight vector $\mathbf{w}$ is strongly inef{\kern0pt}f{\kern0pt}icient.
Apply Lemma \ref{lemma:acyclic}
and consider the acyclic tournament associated to $\mathbf{A}$ and $\mathbf{v}$.
We can assume without loss of generality that the Hamiltonian path is already
$1 \rightarrow 2 \rightarrow \ldots \rightarrow n.$
Then
\begin{equation}
 \frac{w_i}{w_j} > a_{ij} \text{ for all } 1 \leq i < j \leq n.  \label{vivjaij}    
\end{equation}
The $i$-th equation of (\ref{Avlambdav}) is
\begin{equation}
\sum\limits_{j=1}^{n} a_{ij} w_j = \lambda_{\max} w_i,  \label{Avlambdav-i}    
\end{equation}
the left hand side of which is bounded above due to (\ref{vivjaij}):
\[
\sum\limits_{j=1}^{n} a_{ij} w_j <  \sum\limits_{j=1}^{n} \frac{w_i}{w_j} w_j = n w_i
\]
which contradicts $\lambda_{\max} \geq n.$
\end{proof}

\section{Ef{\kern0pt}f{\kern0pt}iciency test and search for an ef{\kern0pt}f{\kern0pt}icient dominating weight vector by linear programming}
\label{section:4} 

Let a pairwise comparison matrix
$\mathbf{A} =
\left[
a_{ij}
\right]_{i,j=1,\ldots,n}$ and a positive weight vector
$\mathbf{w} = (w_1, w_2, \ldots, w_n)^{\top}$ be given as before.
First we shall verify whether $\mathbf{w}$ is ef{\kern0pt}f{\kern0pt}icient
for (\ref{eq:xMultiObjectiveOptimizationProblem}) by solving an appropriate linear program.
Furthermore, if $\mathbf{w}$ is inef{\kern0pt}f{\kern0pt}icient, the optimal solution
of the linear program provides an ef{\kern0pt}f{\kern0pt}icient weight vector that dominates $\mathbf{w}$ internally. \\

Recall the double inequality (\ref{def:DefinitionInternallyEfficientProperty1}) in Def{\kern0pt}inition
(\ref{def:DefinitionInternallyEfficient}). For every positive weight vector
$\mathbf{x}=(x_1,x_2,\ldots,x_n)^{\top}$
\begin{equation}
\label{aijxixjeq1} 
\begin{aligned}
a_{ij} \leq \frac{x_i}{x_j} \underset{(<)}{\leq} \frac{w_i}{w_j} \, &\Longleftrightarrow \,
\left( \frac{a_{ij}x_j}{x_i}  \leq 1, \, \frac{x_i}{x_j} \frac{w_j}{w_i} \underset{(<)}{\leq} 1 \right) \, \Longleftrightarrow \\
&\Longleftrightarrow \, \left( \frac{a_{ij}x_j}{x_i}  \leq 1, \, \frac{x_i}{x_j} \frac{w_j}{w_i} \frac{1}{t_{ij}} \leq 1 \text{ for some }
0 < t_{ij} \underset{(<)}{\leq} 1 \right),
\end{aligned}
\end{equation}
and
\begin{equation}
\label{aijxixjeq2}  
\begin{aligned}
a_{ij} \geq \frac{x_i}{x_j} \underset{(>)}{\geq} \frac{w_i}{w_j} \, &\Longleftrightarrow \,
\left( \frac{x_i}{a_{ij}x_j}  \leq 1, \, \frac{x_j}{x_i} \frac{w_i}{w_j} \underset{(<)}{\leq} 1 \right) \, \Longleftrightarrow \\
&\Longleftrightarrow \, \left( \frac{x_i}{a_{ij}x_j}  \leq 1, \, \frac{x_j}{x_i} \frac{w_i}{w_j} \frac{1}{t_{ij}} \leq 1 \text{ for some }
0 < t_{ij} \underset{(<)}{\leq} 1 \right),
\end{aligned}
\end{equation}
and
\begin{equation}
\label{aijxixjeq3}  
\begin{aligned}
a_{ij}  = \frac{x_i}{x_j}  \, \Longleftrightarrow \, \frac{x_i}{a_{ij}x_j}  = 1.
\end{aligned}
\end{equation}
This leads us to develop the following optimization problem.\\

Def{\kern0pt}ine index sets
\begin{align}
 I &= \left\{ (i,j) \left|  a_{ij} < \frac{w_i}{w_j}   \right. \right\} \nonumber  \\
 J &= \left\{ (i,j) \left|  a_{ij} = \frac{w_i}{w_j}, i < j   \right. \right\} \nonumber
\end{align}
The index set $I$ is empty if and only if pairwise comparison matrix $\mathbf{A}$ is consistent. In this case
weight vector $\mathbf{w}$ is ef{\kern0pt}f{\kern0pt}icient and $|J| = n(n-1)/2$.
It is assumed in the sequel that $I$ is nonempty. No assumptions are needed for the (non)emptiness of $J$.

\begin{align}
\min  \prod\limits_{(i,j)\in I } t_{ij}
     &  &  \nonumber  \\
\frac{x_j}{x_i} a_{ij} & \leq 1    &\text{ for all $(i,j) \in I$,} \nonumber  \\
\frac{x_i}{x_j} \frac{w_j}{w_i} \frac{1}{t_{ij}}  & \leq 1  &\text{ for all $(i,j) \in I$,} \label{P}  \\
0 < t_{ij}  & \leq 1 &\text{ for all $(i,j) \in I$,} \nonumber  \\
a_{ji} \frac{x_i}{x_j} & = 1    &\text{ for all $(i,j) \in J$,} \nonumber \\
                   x_1 & = 1    & \nonumber
\end{align}
Variables are $x_i > 0, \,  1 \leq i \leq n$ and $t_{ij}, \, (i,j) \in I.$ \\

\begin{proposition} \label{prop:P}
The optimum value of the optimization problem (\ref{P}) is at most 1 and it is equal to 1 if and only if
weight vector $\mathbf{w}$ is ef{\kern0pt}f{\kern0pt}icient
 for (\ref{eq:xMultiObjectiveOptimizationProblem}).
Denote the optimal solution to (\ref{P}) by $(\mathbf{x}^{\ast},\mathbf{t}^{\ast}) \in \mathbb{R}^{n+|I|}_{+}.$
If weight vector $\mathbf{w}$ is inef{\kern0pt}f{\kern0pt}icient, then
weight vector $\mathbf{x}^{\ast}$ is ef{\kern0pt}f{\kern0pt}icient and dominates $\mathbf{w}$ internally.
\end{proposition}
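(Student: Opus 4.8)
The plan is to show that the constraints of (\ref{P}) encode exactly the weak form of the internal domination relation of Definition \ref{def:DefinitionInternallyEfficient}, via the equivalences (\ref{aijxixjeq1})--(\ref{aijxixjeq3}), so that feasible points of (\ref{P}) with objective value below $1$ are precisely the weight vectors that internally dominate $\mathbf{w}$. By Remark \ref{remark:arbitrarynormalization} we may assume $w_1=1$. First note that, after the substitution $u_i=\log x_i$, $s_{ij}=\log t_{ij}$, problem (\ref{P}) becomes a linear program whose feasible set is compact: using reciprocity to cover every ordered pair, the $I$- and $J$-constraints confine each difference $u_i-u_j$ to a closed bounded interval, $u_1=0$ is fixed, and then each $s_{ij}$ is pinned into $[\,\log(x_i/x_j)-\log(w_i/w_j),\,0\,]$. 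Hence the infimum is attained; let $(\mathbf{x}^{\ast},\mathbf{t}^{\ast})$ be an optimizer. Since $\mathbf{x}=\mathbf{w}$ together with $t_{ij}=1$ for all $(i,j)\in I$ is feasible with objective value $1$, the optimum is at most $1$.

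Next I would record the translation of feasibility. For $(i,j)\in I$ (so $a_{ij}<w_i/w_j$) the first two inequalities of (\ref{P}) say precisely $a_{ij}\le x_i/x_j$ and $x_i/x_j\le t_{ij}\,w_i/w_j$, which with $0<t_{ij}\le 1$ force $a_{ij}\le x_i/x_j\le w_i/w_j$ and $t_{ij}\ge (x_i/x_j)/(w_i/w_j)$; for $(i,j)\in J$ the equation $a_{ji}x_i/x_j=1$ forces $x_i/x_j=a_{ij}=w_i/w_j$. Using $a_{ji}=1/a_{ij}$, the ordered pairs reciprocal to $I$ and to $J$ are handled automatically, so every off-diagonal pair satisfies the weak inequalities of (\ref{def:DefinitionInternallyEfficientProperty1}), and $t_{ij}<1$ for some $(i,j)\in I$ is exactly a strict improvement $x_i/x_j<w_i/w_j$ there, i.e.\ (\ref{def:DefinitionInternallyEfficientProperty2}). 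Hence if the optimum is $<1$, then $t^{\ast}_{k\ell}<1$ for some $(k,\ell)\in I$ and $\mathbf{x}^{\ast}$ internally dominates $\mathbf{w}$, so by Proposition \ref{proposition:EfficientInternallyEfficient} (cf.\ Corollary \ref{corollary:EfficiencyDefinitionsAreEquivalent}) $\mathbf{w}$ is inefficient. Conversely, if $\mathbf{w}$ is inefficient, hence internally inefficient, pick an internal dominator $\mathbf{w}'$ with $w_1'=1$; then $\mathbf{x}=\mathbf{w}'$, $t_{ij}=(w_i'/w_j')/(w_i/w_j)$ is feasible with objective $\prod_{(i,j)\in I}(w_i'/w_j')/(w_i/w_j)\le 1$, and the strict position of (\ref{def:DefinitionInternallyEfficientProperty2}) cannot be a $J$- or reciprocal-$J$-position (there $a_{k\ell}=w_k/w_\ell$ would force two contradictory strict inequalities), so it lies in $I$ or its reciprocal and the product drops strictly below $1$. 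Thus the optimum equals $1$ if and only if $\mathbf{w}$ is efficient.

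Finally, assume $\mathbf{w}$ is inefficient. Then the optimum is $<1$, so $\mathbf{x}^{\ast}$ internally dominates $\mathbf{w}$ by the above, hence dominates it in the sense of Definition \ref{def:DefinitionEfficient} by Proposition \ref{proposition:EfficientInternallyEfficient}. To see that $\mathbf{x}^{\ast}$ is efficient, suppose not: then $\mathbf{x}^{\ast}$ is internally inefficient, so some $\mathbf{y}$ (normalize $y_1=1$) internally dominates $\mathbf{x}^{\ast}$. Put $x_i'=y_i$ and $t'_{ij}=(y_i/y_j)/(w_i/w_j)$ for $(i,j)\in I$. From the feasibility of $\mathbf{x}^{\ast}$, for $(i,j)\in I$ we have $a_{ij}\le x^{\ast}_i/x^{\ast}_j\le w_i/w_j$, so internal domination of $\mathbf{x}^{\ast}$ by $\mathbf{y}$ yields $a_{ij}\le y_i/y_j\le x^{\ast}_i/x^{\ast}_j$; and for $(i,j)\in J$, feasibility gives $x^{\ast}_i/x^{\ast}_j=a_{ij}$, whence $y_i/y_j=a_{ij}$. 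Therefore $(\mathbf{x}',\mathbf{t}')$ is feasible for (\ref{P}) and, using $t^{\ast}_{ij}\ge (x^{\ast}_i/x^{\ast}_j)/(w_i/w_j)$, one gets $t'_{ij}\le t^{\ast}_{ij}$ for every $(i,j)\in I$. The strict position of $\mathbf{y}$ over $\mathbf{x}^{\ast}$ satisfies $a_{k\ell}\ne x^{\ast}_k/x^{\ast}_\ell$, hence (by the $J$-pinning above) is not a $J$- or reciprocal-$J$-position, so it lies in $I$ or its reciprocal; reciprocity turns it into an index pair in $I$ at which $t'<t^{\ast}$ strictly. Then $\prod_{(i,j)\in I}t'_{ij}<\prod_{(i,j)\in I}t^{\ast}_{ij}$, contradicting optimality. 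Hence $\mathbf{x}^{\ast}$ is internally efficient, and so efficient.

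The main obstacle is this last step: one must manufacture from a hypothetical internal dominator $\mathbf{y}$ of $\mathbf{x}^{\ast}$ a strictly better feasible point of (\ref{P}), which needs both the feasibility bound $t^{\ast}_{ij}\ge (x^{\ast}_i/x^{\ast}_j)/(w_i/w_j)$ and a careful use of reciprocity to relocate the single guaranteed strict improvement into the index set $I$ over which the product is formed; the equalities forced on $J$-positions are what rule out losing that strict improvement.
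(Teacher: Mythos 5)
Your proposal is correct and follows essentially the same route as the paper's proof: establish attainment of the optimum, identify feasible points with objective value below $1$ with internal dominators of $\mathbf{w}$ via (\ref{aijxixjeq1})--(\ref{aijxixjeq3}), and prove efficiency of $\mathbf{x}^{\ast}$ by turning a hypothetical internal dominator into a feasible point of (\ref{P}) with strictly smaller objective, using the bound $t^{\ast}_{ij}\ge \frac{x^{\ast}_i}{x^{\ast}_j}\frac{w_j}{w_i}$. Your treatment is if anything slightly more explicit than the paper's (the log-transform compactness argument and the reciprocity relocation of the strict index into $I$), but it is the same argument in substance.
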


\begin{proof}
The constraints in (\ref{aijxixjeq1})-(\ref{aijxixjeq3}) are obtained by simple rearrangements. It is obvious that in (\ref{aijxixjeq1})
 $\frac{x_i}{x_j} \frac{w_j}{w_i} \leq 1$
if and only there exists a scalar
$0 < t_{ij} {\leq} 1$ such that
$\frac{x_i}{x_j} \frac{w_j}{w_i} \frac{1}{t_{ij}}\le 1$. In addition, the inequalities hold as strict inequalities simultaneously on both sides. The reasoning is similar for (\ref{aijxixjeq2}), and (\ref{aijxixjeq3}) is evident.

In (\ref{P}), only the constraints belonging to the index pairs from $I$ and $J$ appear. Due to the reciprocity property, the remainder constraints are now redundant. First, we show that the feasible set of problem (\ref{P}) is a nonempty compact set. Therefore, since the objective function is continuous, (\ref{P}) has a f{\kern0pt}inite optimum value and an optimal solution.

Problem (\ref{P}) has a feasible solution, e.g. $\mathbf{x}=\frac{1}{w_1}\mathbf{w}$ and $t_{ij}=1,\,\text{ for all } (i,j)\in I$ fulf{\kern0pt}ill the constraints. Due to the normalization constraint $x_1=1$, the other variables $x_i, i\ne 1$, have  f{\kern0pt}inite positive lower and upper bounds over the feasible set. This comes from the property that for all $i\ne 1$, either $(i,1)$ or $(1,i)$ is in $I\cup J$. The fourth constraint gives a f{\kern0pt}ixed value for $x_i$, and  positive lower and upper bounds can be computed from the f{\kern0pt}irst and second constraints. Since the components of $\mathbf{x}$ have positive upper and lower bound, from the second constraint, positive lower bounds can be computed for the variables $t_{ij}, (i,j)\in I$, too.

The objective function serves for testing the internal ef{\kern0pt}f{\kern0pt}iciency  of  $\mathbf{w}$. Its value cannot exceed 1.
If its value is less than 1, then there exists an index pair
$(i_0, j_0)$ for which 
$\frac{x_{i_0}}{x_{j_0}} \frac{w_{j_0}}{w_{i_0}}\le t_{{i_0}{j_0}}<1$,
hence $\frac{x_{i_0}}{x_{j_0}}<\frac{w_{i_0}}{w_{j_0}}$. From this and the equivalent forms in (\ref{aijxixjeq1}) and (\ref{aijxixjeq3}), we get that $\mathbf{x}$ internally dominates $\mathbf{w}$. Conversely, assume that $\mathbf{x}$ internally dominates $\mathbf{w}$. It is easy to see that the normalized vector $\mathbf{x}$ with
$t_{{i}{j}} = \frac{x_{i}}{x_{j}} \frac{w_{j}}{w_{i}},\,(i,j)\in I$, is feasible to (\ref{P}). In addition, for every index pair $(i_0, j_0)$ for which, due to the internal dominance,  $\frac{x_{i_0}}{x_{j_0}}<\frac{w_{i_0}}{w_{j_0}}$ holds, we have $t_{{i_0}{j_0}}<1$, thus, the considered feasible solution has an objective function value less than 1, implying that the optimal value is also less than 1.

It remains to deal with the case when $\mathbf{w}$ turns out to be inef{\kern0pt}f{\kern0pt}icient. It is obvious that the $\mathbf{x}$-part of the optimal solution $(\mathbf{x}^{\ast},\mathbf{t}^{\ast})$ of (\ref{P}) internally dominates $\mathbf{w}$, and $t_{{i}{j}}^{\ast}= \frac{x_{i}^{\ast}}{x_{j}^{\ast}} \frac{w_{j}}{w_{i}},\,\text{ for all } (i,j)\in I$. Assume that $\mathbf{x}^{\ast}$ is inef{\kern0pt}f{\kern0pt}icient. Then it is internally dominated by a vector $\mathbf{\bar x}$, For $\mathbf{\bar x}$, we have $a_{ij}=\frac{\bar x_i}{\bar x_j},\,\text{ for all } (i,j)\in J$. Also,
$a_{ij}\le\frac{\bar x_i}{\bar x_j}\le\frac{x_i^{\ast}}{{x_j}^{\ast}}\le\frac{ w_i}{w_j},\,\text{ for all } (i,j)\in I$ and there exists at least one index pair
$(i_0, j_0)\in I$
for which the second inequality is strict.   Let
$\bar t_{{i}{j}}= \frac{\bar x_{i}}{\bar x_{j}} \frac{w_{j}}{w_{i}},\,\text{ for all } (i,j)\in I$.
It is easy to see that after a normalization, $(\mathbf{\bar x},\mathbf{\bar t})$ is feasible to (\ref{P}). However, we also have $\bar t_{{i}{j}}\le t_{{i}{j}}^{\ast}, \,\text{ for all } (i,j)\in I$ and
$\bar t_{{i_0}{j_0}} < t_{{i_0}{j_0}}^{\ast}$. This implies that the objective function value at $(\mathbf{\bar x},\mathbf{\bar t})$ is less than that at $(\mathbf{x}^{\ast},\mathbf{t}^{\ast})$. This contradicts the fact that
$(\mathbf{x}^{\ast},\mathbf{t}^{\ast})$ is an optimal solution to (\ref{P}). Consequently, $\mathbf{x}^{\ast}$ is an ef{\kern0pt}f{\kern0pt}icient solution.

\end{proof}

Optimization problem (\ref{P}) is nonlinear but it can be transformed to a linear program.
Let denote
$y_i = \log x_i,  \, v_i = \log w_i, \, 1 \leq i \leq n;$
$s_{ij} = - \log t_{ij}, \, (i,j) \in I;$ and
$b_{ij} = \log a_{ij}, \, 1 \leq i,j \leq n.$
Taking the logarithm of the objective function and the constraints in (\ref{P}), we arrive at an equivalent linear program

\begin{align}
\min  \sum\limits_{(i,j)\in I } - s_{ij}
     &  &  \label{LP1}  \\
y_j - y_i  & \leq - b_{ij}    &\text{ for all $(i,j) \in I$,} \label{LP2}  \\
y_i - y_j + s_{ij} & \leq  v_i - v_j  &\text{ for all $(i,j) \in I$,} \label{LP3} \\
y_i - y_j  & = b_{ij}    &\text{ for all $(i,j) \in J$,} \label{LP4} \\
s_{ij}  & \geq 0  &\text{ for all $(i,j) \in I$,} \label{LP5} \\
                   y_1 & = 0    & \label{LP6}
\end{align}
Variables are $y_i, \,  1 \leq i \leq n$ and $s_{ij} \geq 0, \, (i,j) \in I.$ \\

\begin{theorem} \label{thm:LP}   
The optimum value of the linear program (\ref{LP1})-(\ref{LP6}) is at most 0
and it is equal to 0 if and only if weight vector $\mathbf{w}$ is ef{\kern0pt}f{\kern0pt}icient
 for (\ref{eq:xMultiObjectiveOptimizationProblem}).
Denote the optimal solution to (\ref{LP1})-(\ref{LP6}) by $(\mathbf{y}^{\ast},\mathbf{s}^{\ast}) \in \mathbb{R}^{n+|I|}.$
If weight vector $\mathbf{w}$ is inef{\kern0pt}f{\kern0pt}icient, then
weight vector $\exp({\mathbf{y}}^{\ast})$ is ef{\kern0pt}f{\kern0pt}icient and dominates $\mathbf{w}$ internally.
\end{theorem}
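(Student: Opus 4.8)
The plan is to obtain Theorem~\ref{thm:LP} directly from Proposition~\ref{prop:P}: the substitution described in the paragraph preceding the statement is an order-preserving change of variables that identifies the nonlinear problem~(\ref{P}) with the linear program~(\ref{LP1})--(\ref{LP6}). First I would check the constraint-by-constraint correspondence. With $y_i=\log x_i$, $s_{ij}=-\log t_{ij}$, $v_i=\log w_i$, $b_{ij}=\log a_{ij}$, taking logarithms sends $\frac{x_j}{x_i}a_{ij}\le 1$ to $y_j-y_i\le -b_{ij}$, i.e.\ (\ref{LP2}); sends $\frac{x_i}{x_j}\frac{w_j}{w_i}\frac{1}{t_{ij}}\le 1$ to $y_i-y_j+s_{ij}\le v_i-v_j$, i.e.\ (\ref{LP3}); sends $a_{ji}\frac{x_i}{x_j}=1$ to $y_i-y_j=b_{ij}$ (using $\log a_{ji}=-b_{ij}$), i.e.\ (\ref{LP4}); sends $x_1=1$ to $y_1=0$, i.e.\ (\ref{LP6}); and sends $0<t_{ij}\le 1$ to $s_{ij}\ge 0$, i.e.\ (\ref{LP5}). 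The positivity $x_i>0$ and $t_{ij}>0$ becomes automatic after the substitution, since $x_i=\exp(y_i)$ and $t_{ij}=\exp(-s_{ij})$. Hence $(\mathbf{x},\mathbf{t})\mapsto(\log\mathbf{x},-\log\mathbf{t})$ is a bijection between the feasible set of~(\ref{P}) and that of~(\ref{LP1})--(\ref{LP6}), with inverse $(\mathbf{y},\mathbf{s})\mapsto(\exp(\mathbf{y}),\exp(-\mathbf{s}))$.

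Next I would compare the objective functions. Since $\log\prod_{(i,j)\in I}t_{ij}=\sum_{(i,j)\in I}(-s_{ij})$ and the logarithm is strictly increasing, a feasible point of~(\ref{P}) is optimal if and only if its image is optimal for~(\ref{LP1})--(\ref{LP6}), and the optimal value of~(\ref{P}) equals the exponential of the optimal value of~(\ref{LP1})--(\ref{LP6}). Consequently the statement that the optimal value of~(\ref{P}) is at most $1$ translates into the statement that the optimal value of~(\ref{LP1})--(\ref{LP6}) is at most $0$, and equality to $1$ translates into equality to $0$. Combined with Proposition~\ref{prop:P}, this gives the first two assertions: the optimum of the linear program is at most $0$, and it equals $0$ precisely when $\mathbf{w}$ is efficient for~(\ref{eq:xMultiObjectiveOptimizationProblem}).

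The one point that needs a short separate argument is that the linear program actually attains its optimum, since its feasible set is unbounded in the $\mathbf{s}$-directions and the compactness exploited in the proof of Proposition~\ref{prop:P} does not transfer verbatim. The quickest route is to transport the optimal solution of~(\ref{P}) (which exists by Proposition~\ref{prop:P}) through the bijection: its image is feasible for the linear program and must be optimal there, since any strictly better feasible point of the linear program would pull back to a strictly better feasible point of~(\ref{P}). Alternatively one argues directly that the linear program is feasible and bounded: by the reciprocity argument of Proposition~\ref{prop:P}, for every $i\ne 1$ at least one of $(i,1),(1,i)$ lies in $I\cup J$, so (\ref{LP2})--(\ref{LP4}) together with $y_1=0$ bound each $y_i$ on both sides, whence (\ref{LP3}) bounds each $s_{ij}$ from above and the objective is bounded below.

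Finally, for the last assertion, suppose $\mathbf{w}$ is inefficient, and let $(\mathbf{y}^{\ast},\mathbf{s}^{\ast})$ be optimal for the linear program. By the bijection, $(\exp(\mathbf{y}^{\ast}),\exp(-\mathbf{s}^{\ast}))$ is an optimal solution $(\mathbf{x}^{\ast},\mathbf{t}^{\ast})$ of~(\ref{P}), so Proposition~\ref{prop:P} applies and the weight vector $\exp(\mathbf{y}^{\ast})=\mathbf{x}^{\ast}$ is efficient and dominates $\mathbf{w}$ internally. I expect the attainment step above to be the only mildly delicate part; everything else is a routine logarithm/exponential translation of Proposition~\ref{prop:P}.
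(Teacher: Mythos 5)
Your proposal is correct and follows essentially the same route as the paper, which obtains Theorem \ref{thm:LP} from Proposition \ref{prop:P} precisely by the logarithmic change of variables you spell out (the paper leaves the constraint-by-constraint verification and the attainment remark implicit). Your aside that the LP's feasible set is unbounded in the $\mathbf{s}$-directions is actually not needed (and not quite accurate, since the bounds on the $y_i$ you derive make it compact), but your transport-of-the-optimum argument settles attainment correctly in any case.
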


An example is given in the Appendix.

\section{Test of weak ef{\kern0pt}f{\kern0pt}iciency and search for an ef{\kern0pt}f{\kern0pt}icient dominating weight vector by linear programming}
\label{section:5} 

The test of weak ef{\kern0pt}f{\kern0pt}iciency and searching for a dominating weakly ef{\kern0pt}f{\kern0pt}icient point can be carried out similarly to the case of ef{\kern0pt}f{\kern0pt}iciency. Consider a vector $\mathbf{w}>\mathbf{0}$. Obviously, if $J\ne\emptyset$, i.e.\ $f_{ij}(\mathbf{w})=0$ for an index pair $i\ne j$, then $\mathbf{w}\in W\!E$, so we are ready with the test of weak ef{\kern0pt}f{\kern0pt}iciency.

Now, examine the case $J=\emptyset$. Then $|I| = n(n-1)/2$.
Note that if the rows and columns of pairwise comparison matrix $\mathbf{A}$
are permuted according to Lemma \ref{lemma:acyclic}, then $I = \{(i,j) | 1 \leq i < j \leq n  \}.$
Here are some equivalent forms for strong inef{\kern0pt}f{\kern0pt}iciency. For all $(i,j) \in I$

\begin{align}
a_{ij} \leq \frac{x_i}{x_j} < \frac{w_i}{w_j}   \Longleftrightarrow &  \,
\left( \frac{a_{ij}x_j}{x_i}  \leq 1, \,
\frac{x_i}{x_j} \frac{w_j}{w_i} < 1 \right)\ \nonumber
\\
\Longleftrightarrow & \, \left( \frac{a_{ij}x_j}{x_i}  \leq 1, \, \frac{x_i}{x_j} \frac{w_j}{w_i} \frac{1}{t} \leq 1,
0 < t < 1 \right) . 
\label{eq:WEforms} 
\end{align}

Based on the last form of (\ref{eq:WEforms}), we can establish a modif{\kern0pt}ication of problem (\ref{P}), adapting it to the case of weak ef{\kern0pt}f{\kern0pt}iciency.
\begin{align}
\min t
     &    \nonumber  \\
\frac{x_j}{x_i} a_{ij} & \leq 1 \ \text{ for all $(i,j) \in I$,} \nonumber  \\
\frac{x_i}{x_j} \frac{w_j}{w_i} \frac{1}{t}  & \leq 1 \ \text{ for all $(i,j) \in I$,} \label{eq:WP}  \\
0 < t & \leq 1  \nonumber  \\                   x_1 & = 1.    \nonumber
\end{align}
Variables are $x_i > 0, \,  1 \leq i \leq n$ and $t$. \\

\begin{proposition} \label{prop:WP}
The optimum value of the optimization problem (\ref{eq:WP}) is at most 1 and it is equal to 1 if and only if
weight vector $\mathbf{w}$ is weakly ef{\kern0pt}f{\kern0pt}icient
 for (\ref{eq:xMultiObjectiveOptimizationProblem}).
Denote the optimal solution to (\ref{eq:WP}) by $(\mathbf{x}^{\ast},t^{\ast}) \in \mathbb{R}^{n+1}_{+}.$
If weight vector $\mathbf{w}$ is strongly inef{\kern0pt}f{\kern0pt}icient, then
weight vector $\mathbf{x}^{\ast}$ is weakly ef{\kern0pt}f{\kern0pt}icient and dominates $\mathbf{w}$ internally and strictly.
\end{proposition}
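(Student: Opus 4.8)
The plan is to mirror the proof of Proposition~\ref{prop:P}, replacing the product objective by the single slack variable $t$ and keeping $J=\emptyset$ throughout, since the case $J\ne\emptyset$ has already been dispatched ($\mathbf{w}\in W\!E$ then). First I would check that (\ref{eq:WP}) attains its optimum: the point $\mathbf{x}=\frac{1}{w_1}\mathbf{w}$, $t=1$ is feasible, and because $J=\emptyset$ exactly one of $(i,1),(1,i)$ lies in $I$ for every $i\ne 1$, so the first and second constraints together with $x_1=1$ force finite positive lower and upper bounds on each $x_i$, and then the second constraint forces a positive lower bound on $t$. Hence the feasible set is nonempty and compact, the continuous objective attains its minimum, and since $t\le 1$ is imposed the optimal value is at most $1$.

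Next, the equivalence. Using the chain (\ref{eq:WEforms}) I would argue that the optimal value is strictly below $1$ exactly when $\mathbf{w}$ is strongly inefficient. If $t^{\ast}<1$, then for every $(i,j)\in I$ we have $a_{ij}\le \frac{x_i^{\ast}}{x_j^{\ast}}$ and $\frac{x_i^{\ast}}{x_j^{\ast}}\le t^{\ast}\frac{w_i}{w_j}<\frac{w_i}{w_j}$; since $I$ (after the permutation of Lemma~\ref{lemma:acyclic}) contains exactly one orientation of each off-diagonal pair and reciprocity handles the rest, $\mathbf{x}^{\ast}$ improves every off-diagonal ratio, i.e.\ strongly — indeed strictly internally — dominates $\mathbf{w}$. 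Conversely, if $\mathbf{w}$ is strongly inefficient then $\mathbf{w}\notin W\!E_I$ by Proposition~\ref{proposition:EandWEandEquivalentForms}, so there is $\mathbf{w}^{\prime}$ with $a_{ij}\le \frac{w_i^{\prime}}{w_j^{\prime}}<\frac{w_i}{w_j}$ for all $(i,j)\in I$; then $\mathbf{x}=\frac{1}{w_1^{\prime}}\mathbf{w}^{\prime}$ and $t=\max_{(i,j)\in I}\frac{w_i^{\prime}/w_j^{\prime}}{w_i/w_j}<1$ is feasible with objective value below $1$.

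Finally, when $\mathbf{w}$ is strongly inefficient I would show $\mathbf{x}^{\ast}$ is weakly efficient by contradiction: if not, $\mathbf{x}^{\ast}$ is strongly inefficient, hence (again via $W\!E=W\!E_I$) strictly internally dominated by some $\bar{\mathbf{x}}$ with $a_{ij}\le \frac{\bar x_i}{\bar x_j}<\frac{x_i^{\ast}}{x_j^{\ast}}\le t^{\ast}\frac{w_i}{w_j}$ for all $(i,j)\in I$. Then $\bar t:=\max_{(i,j)\in I}\frac{\bar x_i/\bar x_j}{w_i/w_j}$ is a maximum of finitely many quantities each strictly less than $t^{\ast}$, so $\bar t<t^{\ast}$; checking the constraints, $(\frac{1}{\bar x_1}\bar{\mathbf{x}},\bar t)$ is feasible for (\ref{eq:WP}) with objective value $\bar t<t^{\ast}$, contradicting optimality. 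The same computation (with $t^{\ast}<1$) shows $\mathbf{x}^{\ast}$ dominates $\mathbf{w}$ internally and strictly, which finishes the claim.

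The main obstacle I anticipate is not analytic but bookkeeping: invoking $W\!E=W\!E_I$ at the right moments to pass between ``strongly dominates'' and ``strictly internally dominates,'' handling the normalizations, and — the one place where strictness could be lost — verifying that the single variable $t$ can always be chosen \emph{strictly} smaller than a given feasible or optimal value whenever the dominating vector is strict; the finiteness of $I$ is exactly what makes the max argument go through.
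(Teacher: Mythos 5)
Your proof is correct and follows essentially the same route as the paper, which proves the proposition by analogy with Proposition \ref{prop:P}: feasibility/boundedness giving an optimal $t^{\ast}\le 1$, the equivalence ``$t^{\ast}<1$ iff $\mathbf{w}$ is internally strongly dominated,'' and a contradiction via a strictly smaller feasible $\bar t$ built from a strict internal dominator of $\mathbf{x}^{\ast}$. If anything, your explicit appeals to $W\!E=W\!E_I$ and to the finiteness of $I$ in the max argument make the strictness bookkeeping slightly more careful than the paper's own wording.
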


\begin{proof}
The statements can be proved by analogy with the proof of Proposition \ref{prop:P}. By using the same reasoning as there, one can easily show that the feasible set of (\ref{eq:WP}) is not empty, and positive upper and lower bounds can be determined for each variable. Thus (\ref{eq:WP}) has an optimal solution and a positive optimal value $t^{\ast}\le 1$.

If $t^{\ast} < 1$, then
$\frac{x_{i}}{x_{j}} \frac{w_{j}}{w_{i}}\le t^{\ast} < 1$ for all $i \neq j$, implying that $\mathbf{x}$ internally strongly dominates $\mathbf{w}$. Conversely, assume that $\mathbf{x}$ internally strongly dominates $\mathbf{w}$. It is easy to see that the normalized vector $\mathbf{x}$ with
$t = \max_{i \neq j}\frac{x_{i}}{x_{j}} \frac{w_{j}}{w_{i}}$ is feasible to (\ref{eq:WP}). It is obvious that $0<t<1$ at this feasible solution, implying that $t^{\ast} < 1$ at the optimal solution.

Consider the case when $\mathbf{w}$ has turned out to be weakly inef{\kern0pt}f{\kern0pt}icient, i.e. it is strongly dominated.  It is obvious that the $\mathbf{x}$-part of the optimal solution $(\mathbf{x}^{\ast},\mathbf{t}^{\ast})$ of (\ref{eq:WP}) internally dominates $\mathbf{w}$, and $t^{\ast}=\max_{i \neq j} \frac{x_{i}^{\ast}}{x_{j}^{\ast}} \frac{w_{j}}{w_{i}}$. Assume that $\mathbf{x}^{\ast}$ is inef{\kern0pt}f{\kern0pt}icient. Then it is internally strongly dominated by a vector $\mathbf{\bar x}$. For $\mathbf{\bar x}$,
$a_{ij}\le\frac{\bar x_i}{\bar x_j} < \frac{x_i^{\ast}}{{x_j}^{\ast}}\le\frac{ w_i}{w_j},\, \text{ for all } i \neq j$.   Let
$\bar t=\max_{i \neq j} \frac{\bar x_{i}}{\bar x_{j}} \frac{w_{j}}{w_{i}}$.
It is easy to see that after a normalization, $(\mathbf{\bar x},\mathbf{\bar t})$ is feasible to (\ref{eq:WP}). It is however obvious that  $\bar t < t^{\ast}$, implying that the objective function value at $(\mathbf{\bar x},{\bar t})$ is less than that at $(\mathbf{x}^{\ast},{t}^{\ast})$ contradicting the optimality of the latter one. Consequently, $\mathbf{x}^{\ast}$ is a weakly  ef{\kern0pt}f{\kern0pt}icient solution.
\end{proof}

By using the same idea that was applied to get problem  (\ref{LP1})-(\ref{LP6}) from (\ref{P}), problem (\ref{eq:WP}) can also be transformed to a linear program.
By using the same notations as there, and introducing the variable
$s = - \log t$, we arrive at an equivalent linear program

\begin{align}
\min - s
     &   \nonumber  \\
y_j - y_i  & \leq - b_{ij} \ \ \ \ \text{ for all $(i,j) \in I$} \nonumber  \\
y_i - y_j + s & \leq  v_i - v_j  \ \text{ for all $(i,j) \in I$} \label{eq:WLP} \\ 
s  & \geq 0  \nonumber \\
                   y_1 & = 0     \nonumber
\end{align}
Variables are $y_i, \,  1 \leq i \leq n$ and $s$. \\

\begin{theorem} \label{thm:WLP} 
The optimum value of the linear program (\ref{eq:WLP}) is at most 0
and it is equal to 0 if and only if weight vector $\mathbf{w}$ is weakly ef{\kern0pt}f{\kern0pt}icient
 for (\ref{eq:xMultiObjectiveOptimizationProblem}).
Denote the optimal solution to (\ref{eq:WLP}) by $(\mathbf{y}^{\ast},s) \in \mathbb{R}^{n+1}.$
If weight vector $\mathbf{w}$ is strongly inef{\kern0pt}f{\kern0pt}icient, then
weight vector $\exp({\mathbf{y}}^{\ast})$ is weakly ef{\kern0pt}f{\kern0pt}icient and dominates $\mathbf{w}$ internally and strictly.
\end{theorem}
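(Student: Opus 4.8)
The plan is to obtain Theorem~\ref{thm:WLP} as a direct corollary of Proposition~\ref{prop:WP}, via exactly the logarithmic change of variables that was used to pass from the nonlinear program~(\ref{eq:WP}) to the linear program~(\ref{eq:WLP}); this mirrors the relationship between Proposition~\ref{prop:P} and Theorem~\ref{thm:LP}. Throughout I would work under the standing assumption of this section that $J=\emptyset$ (if $J\ne\emptyset$ then $f_{ij}(\mathbf{w})=0$ for some $i\ne j$, so $\mathbf{w}\in W\!E$ and there is nothing to prove; if $I=\emptyset$ then $\mathbf{A}$ is consistent and again $\mathbf{w}$ is weakly efficient), so that after the permutation of Lemma~\ref{lemma:acyclic} we have $I=\{(i,j)\mid 1\le i<j\le n\}$ and~(\ref{eq:WP}),~(\ref{eq:WLP}) are the two problems actually at hand.

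First I would record that the map $(\mathbf{x},t)\mapsto(\mathbf{y},s)$ given by $y_i=\log x_i$, $v_i=\log w_i$, $b_{ij}=\log a_{ij}$, $s=-\log t$ is a bijection between $\mathbb{R}^n_{++}\times(0,1]$ and $\mathbb{R}^n\times[0,\infty)$, and that it carries the feasible set of~(\ref{eq:WP}) onto that of~(\ref{eq:WLP}): taking logarithms, $\tfrac{x_j}{x_i}a_{ij}\le 1$ becomes $y_j-y_i\le -b_{ij}$, the constraint $\tfrac{x_i}{x_j}\tfrac{w_j}{w_i}\tfrac{1}{t}\le 1$ becomes $y_i-y_j+s\le v_i-v_j$, $0<t\le 1$ becomes $s\ge 0$, and $x_1=1$ becomes $y_1=0$. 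Since $s=-\log t$ is strictly decreasing in $t$, minimizing $t$ over the feasible set is equivalent to minimizing $-s$, and the optimal values are related by $s^{\ast}=-\log t^{\ast}$. In particular, because the optimum $t^{\ast}$ of~(\ref{eq:WP}) satisfies $0<t^{\ast}\le 1$ by Proposition~\ref{prop:WP}, the optimum of~(\ref{eq:WLP}) satisfies $-s^{\ast}\le 0$, with equality precisely when $t^{\ast}=1$. The strict bound $t>0$ causes no difficulty: at the optimum $t^{\ast}=\max_{(i,j)\in I}\tfrac{x_i^{\ast}}{x_j^{\ast}}\tfrac{w_j}{w_i}>0$, using the positive lower bound on the $x_i^{\ast}$ furnished by the compactness argument in Proposition~\ref{prop:WP}, so $s^{\ast}$ is finite.

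With this bijection in hand, Theorem~\ref{thm:WLP} is just the restatement of Proposition~\ref{prop:WP} in logarithmic coordinates. The optimum of~(\ref{eq:WLP}) equals $0$ iff the optimum of~(\ref{eq:WP}) equals $1$, which by Proposition~\ref{prop:WP} holds iff $\mathbf{w}$ is weakly efficient for~(\ref{eq:xMultiObjectiveOptimizationProblem}); this gives the ``if and only if'' assertion, together with the bound $\le 0$ on the optimum. When $\mathbf{w}$ is strongly inefficient, Proposition~\ref{prop:WP} gives that the $\mathbf{x}$-part $\mathbf{x}^{\ast}$ of the optimal solution of~(\ref{eq:WP}) is weakly efficient and dominates $\mathbf{w}$ internally and strictly; since $\mathbf{x}^{\ast}=\exp(\mathbf{y}^{\ast})$ under the bijection, the vector $\exp(\mathbf{y}^{\ast})$ has exactly these properties, which is the final claim.

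The only point requiring genuine care — and essentially the sole obstacle — is the verification in the second paragraph that the change of variables is a true bijection between the two feasible sets which also matches the senses of the two objectives (so that optimal solutions correspond to optimal solutions), including the harmless treatment of the open bound $t>0$. Once that is settled, the theorem is a transcription of Proposition~\ref{prop:WP} and no further substantive work is needed.
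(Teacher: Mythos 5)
Your proposal is correct and follows essentially the same route as the paper, which states Theorem~\ref{thm:WLP} as an immediate consequence of Proposition~\ref{prop:WP} via the logarithmic change of variables $y_i=\log x_i$, $s=-\log t$ that turns (\ref{eq:WP}) into the linear program (\ref{eq:WLP}), exactly as with Proposition~\ref{prop:P} and Theorem~\ref{thm:LP}. Your explicit verification that this substitution is a feasibility-preserving bijection matching the two objectives (and your handling of the open bound $t>0$ and the case $J\ne\emptyset$) is precisely the argument the paper leaves implicit.
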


\begin{remark}
If weight vector $\mathbf{w}$ is strongly inef{\kern0pt}f{\kern0pt}icient
 for (\ref{eq:xMultiObjectiveOptimizationProblem}),
then
weight vector $\exp({\mathbf{y}}^{\ast})$ in Theorem \ref{thm:WLP} is weakly ef{\kern0pt}f{\kern0pt}icient, but not necessarily
ef{\kern0pt}f{\kern0pt}icient. However, linear program (\ref{LP1})-(\ref{LP6}) in Section \ref{section:4} can test its ef{\kern0pt}f{\kern0pt}iciency, and it if is inef{\kern0pt}f{\kern0pt}icient,
(\ref{LP1})-(\ref{LP6}) f{\kern0pt}ind a dominating ef{\kern0pt}f{\kern0pt}icient weight vector, that obviously dominates (internally and strictly)
the strongly inef{\kern0pt}f{\kern0pt}icient weight vector $\mathbf{w}$, too.
\end{remark}

\newpage
\section{Conclusions and open questions}
\label{section:6} 

\subsection{Conclusions}
The key problem of weighting is to approximate the elements of a pairwise comparison matrix, f{\kern0pt}illed in
by the decision maker.
The multi-objective optimization problem (\ref{eq:xMultiObjectiveOptimizationProblem}) has a unique
solution only for consistent pairwise comparison matrices.
Numerical examples show that certain weighting methods, such as the eigenvector, result in inef{\kern0pt}f{\kern0pt}icient
(for (\ref{eq:xMultiObjectiveOptimizationProblem})) solutions. Less formally, the pairwise ratios
do not approximate the matrix elements
in the \emph{best} possible way,
since some of the estimations can be strictly improved without impairing any other one.
Nevertheless, the weak ef{\kern0pt}f{\kern0pt}iciency of the principal eigenvector has been proved in Section 3.

Linear programs have been developed in Sections \ref{section:4} and \ref{section:5} in order to
test ef{\kern0pt}f{\kern0pt}iciency, and to f{\kern0pt}ind an ef{\kern0pt}f{\kern0pt}icient dominating weight vector.

\subsection{Open questions}
Ef{\kern0pt}f{\kern0pt}iciency for (\ref{eq:xMultiObjectiveOptimizationProblem}) is a potential
criterion in future comparative studies of the weighting methods.
Our opinion is that an inef{\kern0pt}f{\kern0pt}icient weight vector is less preferred to any of its
dominating weight vectors, and, especially to the ef{\kern0pt}f{\kern0pt}icient dominating weight vector(s).

The use of models developed in Sections \ref{section:4} and
\ref{section:5} enables the decision maker to improve a possibly inef{\kern0pt}f{\kern0pt}icient weight vector,
however, the problem of generating the whole set of ef{\kern0pt}f{\kern0pt}icient dominating weight vectors
is open.

An extended analysis of numerical examples could show how often inef{\kern0pt}f{\kern0pt}iciency occurs and
how large dif{\kern0pt}ferences there are between an inef{\kern0pt}f{\kern0pt}icient
and an ef{\kern0pt}f{\kern0pt}icient dominating weight vector.

The ef{\kern0pt}f{\kern0pt}iciency analysis of the principal eigenvector is still incomplete.
Suf{\kern0pt}f{\kern0pt}icient conditions are discussed in
\cite{Abele-NagyBozoki2016,Abele-NagyBozokiRebak2017}: if the pairwise comparison
matrix can be made consistent by a modif{\kern0pt}ication of one or two elements (and their reciprocal),
then the eigenvector is ef{\kern0pt}f{\kern0pt}icient for (\ref{eq:xMultiObjectiveOptimizationProblem}).
It is shown in \cite{Bozoki2014} that the eigenvector
can be inef{\kern0pt}f{\kern0pt}icient even if the level of
inconsistency (as proposed by Saaty \cite{Saaty1977}, a positive
linear transformation of the maximal eigenvalue) is arbitrarily
small. However, the necessary and suf{\kern0pt}f{\kern0pt}icient condition
for the ef{\kern0pt}f{\kern0pt}iciency of the principal eigenvector is a challenging open problem.

\section*{Acknowledgements}
The authors are grateful to the three anonymous reviewers for their constructive remarks.
The comments of Michele Fedrizzi (University of Trento) are highly acknowledged.
Research was supported in part by the Hungarian Scientif{\kern0pt}ic Research Fund (OTKA) grant no.~K111797.
S.~Boz\'oki acknowledges the support of the J\'anos Bolyai Research Fellowship of the Hungarian Academy of Sciences
(no.~BO/00154/16).

\newpage
\vspace*{-11mm}

\section*{Appendix} \label{section:Appendix} 

Consider pairwise comparison matrix $\mathbf{A} \in PCM_4$ and its principal right eigenvector ${\mathbf{w}}$
in Example \ref{example:0a}, and the consistent pairwise comparison matrix
$\left[ \frac{{w}_i}{{w}_j} \right]$ as in (\ref{example:0awiperwj}).

The linear program (\ref{LP1})-(\ref{LP6}) is specif{\kern0pt}ied below.
$I=\{(2,1),(2,4),(3,1),(3,2),(4,1),(4,3)\}$ and $J=\emptyset.$

\begin{center}
\begin{tabular}{|c|c|c|c|c|c|c|c|c|c|c|c|c|}
$y_1$  & $y_2$ & $y_3$ & $y_4$  & $s_{21}$ & $s_{24}$ & $s_{31}$ & $s_{32}$ & $s_{41}$ & $s_{43}$ & & &  remark \\
\hline
   0   &    0  &   0   &   0    &    -1    &    -1     &    -1    &    -1    &      -1   &    -1     & $\rightarrow$ & $\min$  &  (\ref{LP1}) \\
\hline
\hline
1 &-1 &   0 &0& 0 &0 &0 &0 &0 &0& $\leq$ &      0     &   (\ref{LP2}), $i=2, \, j=1$  \\   
\hline
-1 &   1 &0& 0 &1& 0 &0& 0& 0& 0& $\leq$ &   0.0753   &   (\ref{LP3}), $i=2, \, j=1$  \\   
\hline
0& -1 &   0& 1 &0& 0& 0 &0& 0& 0& $\leq$ &  $-$1.6094 &   (\ref{LP2}), $i=2, \, j=4$  \\    
\hline
0& 1& 0 &-1 &   0 &1 &0& 0 &0& 0& $\leq$ &   2.1859   &   (\ref{LP3}), $i=2, \, j=4$  \\    
\hline
1& 0& -1 &   0& 0& 0 &0 &0& 0& 0& $\leq$ &   1.3863   &   (\ref{LP2}), $i=3, \, j=1$  \\    
\hline
-1 &   0 &1& 0 &0 &0& 1& 0& 0& 0& $\leq$ &  $-$1.2995 &   (\ref{LP3}), $i=3, \, j=1$  \\    
\hline
0 &1 &-1 &   0 &0& 0& 0 &0 &0& 0& $\leq$ &  1.9459    &   (\ref{LP2}), $i=3, \, j=2$  \\    
\hline
0 &-1&    1& 0 &0& 0 &0 &1 &0& 0& $\leq$ &  $-$1.3749 &   (\ref{LP3}), $i=3, \, j=2$  \\    
\hline
1 &0 &0& -1 &   0 &0 &0 &0& 0 &0& $\leq$ &  2.1972    &   (\ref{LP2}), $i=4, \, j=1$  \\    
\hline
-1&    0& 0& 1 &0 &0 &0 &0& 1 &0& $\leq$ & $-$2.1106  &   (\ref{LP3}), $i=4, \, j=1$              \\    
\hline
0 &0& 1& -1&    0& 0 &0& 0 &0 &0& $\leq$ & 1.3863     &   (\ref{LP2}), $i=4, \, j=3$  \\    
\hline
0& 0 &-1 &   1 &0& 0& 0 &0& 0& 1& $\leq$ & $-$0.8111  &   (\ref{LP3}), $i=4, \, j=3$  \\    
\hline
   0   &    0  &   0   &   0    &     1    &    0     &     0    &    0    &       0   &    0     &    $\geq$   &   0   &  (\ref{LP5}), $i=2, \, j=1$   \\
\hline
   0   &    0  &   0   &   0    &     0    &    1     &     0    &    0    &       0   &    0     &    $\geq$   &   0   &  (\ref{LP5}), $i=2, \, j=4$   \\
\hline
   0   &    0  &   0   &   0    &     0    &    0     &     1    &    0    &       0   &    0     &    $\geq$   &   0   &  (\ref{LP5}), $i=3, \, j=1$   \\
\hline
   0   &    0  &   0   &   0    &     0    &    0     &     0    &    1    &       0   &    0     &    $\geq$   &   0   &  (\ref{LP5}), $i=3, \, j=2$   \\
\hline
   0   &    0  &   0   &   0    &     0    &    0     &     0    &    0    &       1   &    0     &    $\geq$   &   0   &  (\ref{LP5}), $i=4, \, j=1$   \\
\hline
   0   &    0  &   0   &   0    &     1    &    0     &     0    &    0    &       0   &    1     &    $\geq$   &   0   &  (\ref{LP5}), $i=4, \, j=3$   \\
\hline
   1   &    0  &   0   &   0    &     0    &    0     &     0    &    0    &       0   &    0     &     =       &   0   &      (\ref{LP6})   \\
\hline
\end{tabular}
\end{center}

The f{\kern0pt}irst two inequalities belong to $i=2, j=1.$ Right hand sides are calculated as
$-b_{21}=-\log(a_{21})=-\log 1 = 0$ in (\ref{LP2}) and
$v_2-v_1 = \log(w_2)-\log(w_1) \approx 0.0753$ in (\ref{LP3}).\\  

The optimal solution is
$y^{\ast}_1 = y^{\ast}_2 = 0,
y^{\ast}_3 = -1.3749,   
y^{\ast}_4 = -2.1859,   
s^{\ast}_{21} = s^{\ast}_{31} = s^{\ast}_{41} = 0.0753,   
s^{\ast}_{24} = s^{\ast}_{32} = s^{\ast}_{43} = 0,$
and the optimum value is $-s^{\ast}_{21}-s^{\ast}_{31}-s^{\ast}_{41} = -0.226.$  
In order to make the weight vector $\mathbf{x}^{\ast} = \exp({\mathbf{y}}^{\ast}) =
(1, 1, 0.2529, 
0.1124 
)$
comparable to ${\mathbf{w}}$, renormalize it by a multiplication by
${w}_2/x^{\ast}_2 = {w}_3/x^{\ast}_3 = {w}_4/x^{\ast}_4 = 0.436173$ to get
 \[
{\mathbf{w}}^{\ast} =
\begin{pmatrix}
\mathbf{0.436173}  \\
0.436173 \\
0.110295  \\
0.049014
\end{pmatrix},
\]
that dif{\kern0pt}fers from ${\mathbf{w}}$ in its f{\kern0pt}irst coordinate only.
Furthermore, $\mathbf{w}^{\ast}$ equals to weight vector $ \mathbf{w}^{\prime\prime}$ in Example \ref{example:0b}.
One can check from the consistent approximation generated by weight vector ${\mathbf{w}}^{\ast},$
\[
\left[ \frac{{w}^{\ast}_i}{{w}^{\ast}_j} \right] =
\begin{pmatrix}
$\,$   1    $\,\,$ & $\,\,$   \mathbf{1}    $\,\,$ & $\,\,$ \mathbf{3.9546} $\,\,$ & $\,\,$ \mathbf{8.8989} $\,$    \\
$\,$ \mathbf{1}    $\,\,$ & $\,\,$   1    $\,\,$ & $\,\,$ 3.9546 $\,\,$ & $\,\,$ 8.8989 $\,$    \\
$\,$ \mathbf{0.2529} $\,\,$ & $\,\,$ 0.2529 $\,\,$ & $\,\,$   1    $\,\,$ & $\,\,$ 2.2503 $\,$   \\
$\,$ \mathbf{0.1124} $\,\,$ & $\,\,$ 0.1124 $\,\,$ & $\,\,$ 0.4444 $\,\,$ & $\,\,$   1    $\,$
\end{pmatrix} \qquad
\left( =  \left[ \frac{{w}^{\prime\prime}_i}{{w}^{\prime\prime}_j} \right]
\text{ as in (\ref{example:0bwiperwj}) } \right).
\]
that weight vector ${\mathbf{w}}^{\ast}$ internally dominates ${\mathbf{w}}$, moreover,
Theorem \ref{thm:LP} guarantees that weight vector ${\mathbf{x}}^{\ast}$ is ef{\kern0pt}f{\kern0pt}icient,
so is weight vector ${\mathbf{w}}^{\ast}$.
\end{document}